\newtheorem{theorem}{\indent Theorem}[section]
\newtheorem{lemma}[theorem]{\indent Lemma}
\newtheorem{remark}[theorem]{\indent Remark}
\newtheorem{case*}[theorem]{\indent Case*}
\begin{document}
\renewcommand{\baselinestretch}{1.3}

%%%%%%%%%%%%%%%%%%%%%%%%

\begin{center}
    {\large \bf Critical system involving fractional Laplacian}
\vspace{0.5cm}\\
{\sc Maoding Zhen$^{1,2}$,\quad Jinchun He$^{1,2}$ and Haoyuan Xu*$^{1,2}$  }\\
{\small 1) School of Mathematics and Statistics, Huazhong University of Science and Technology,\\ Wuhan 430074, China}\\
{\small 2) Hubei Key Laboratory of Engineering Modeling and Scientific Computing, Huazhong University of Science and Technology,}
{\small Wuhan, 430074, China}
\end{center}

%%%%%%%%%%%%%%%%%%%%%%%%

\renewcommand{\theequation}{\arabic{section}.\arabic{equation}}
\numberwithin{equation}{section}

%%%%%%%%%%%%%%%%%%%%%%%%%%%%%%%%%%%%%%%%%%%%%%%%%%%%%%%%%%%%%%%%%%%%

%%%%%%%%%%%%%%%%%%%%%%%%%%%%%%%%%%%%%%%%%%%%%%%%%%%%%%%%%%%%%%%%%%%%%%
\begin{abstract}
In this paper, we study the following critical system with fractional Laplacian:
\begin{equation*}
\begin{cases}

(-\Delta)^{s}u= \mu_{1}|u|^{2^{\ast}-2}u+\frac{\alpha\gamma}{2^{\ast}}|u|^{\alpha-2}u|v|^{\beta}  \ \ \  \text{in} \ \ \mathbb{R}^{n},\\

(-\Delta)^{s}v= \mu_{2}|v|^{2^{\ast}-2}v+\frac{\beta\gamma}{2^{\ast}}|u|^{\alpha}|v|^{\beta-2}v\ \ \ \ \text{in} \ \ \mathbb{R}^{n},\\

u,v\in D_{s}(\mathbb{R}^{n}).
\end{cases}
\end{equation*}

By using the Nehari\ manifold,\ under proper conditions, we establish the existence and nonexistence of positive least energy solution of the system.

\textbf{Keywords:} fractional Laplacian system; Nehari\ manifold; least energy solution
\end{abstract}
\vspace{-1 cm}

%%%%%%%%%%%%%%%%%%%%%%%%%%%%%%%%%%%%%%%%%%%%%%%%%%%%%%%%%%%%%%%%%%%%
\footnote[0]{ \hspace*{-7.4mm}
$^{*}$ Corresponding author.\\
AMS Subject Classification:  35J50, 35B33, 35R11\\
The authors were supported by the NSFC grant 11571125.\\
E-mails: d201677010@hust.edu.cn; taoismnature@hust.edu.cn; hyxu@hust.edu.cn
}
%%%%%%%%%%%%%%%%%%%%%%%%%%%%%%%%%%%%%%%%%%%%%%%%%%%%%%%%%%%%%%%%%%%%%%

\section{Introduction}
\hspace{0.6cm} Recently, a great attention has been focused on the study of equations or systems involving the fractional Laplacian\ with nonlinear terms, both for their interesting theoretical structure and their concrete
applications(see \cite{BCPS,CDS2,MF,CS1,CRS,QW,SV2,CS2,ZW,SZY,GLZ} and references therein). This type of operator arises in a quite natural way in many different contexts, such as, the thin obstacle problem, finance, phase
transitions, anomalous diffusion, flame propagation and many others(see \cite{GGP,DPV,MMC,LS} and references therein).

Compared to the Laplacian  problem, the fractional Laplacian problem is nonlocal and more challenging. In 2007, L. Caffarelli and L. Silvestre  \cite{CS1} studied an extension problem related to the fractional Laplacian in
${\mathbb R}^n$, which can transform the nonlocal problem into a local problem in ${\mathbb R}_+^{n+1}$. This method can be extended to bounded regions and is extensively used in recent articles. For example, B. Barrios, E.
Colorado, A. de Pablo and U. S\'{a}nchez \cite{BCPS} studied the following nonhomogeneous equation involving fractional Laplacian,
\begin{equation*}
\begin{cases}
(-\Delta)^{s}u= \lambda u^{q}+u^{\frac{n+s}{n-s}} $$ &\text{in} \ \ \Omega,\\

u=0& \text{on} \ \partial\Omega,\\
\end{cases}
\end{equation*}
and proved the existence and multiplicity of solutions under suitable conditions of $s$ and $q$. In the above, the fractional Laplacian operator $(-\Delta)^{s}$ is defined through the spectral decomposition using the powers of the eigenvalues of the positive Laplace operator $(-\Delta)$ with zero Dirichlet boundary data.

Furthermore, E. Colorado, A. de Pablo and U. S\'{a}nchez \cite{CDS2} studied the following fractional equation with critical Sobolev exponent
\begin{equation*}
\begin{cases}
(-\Delta)^{s}u=|u|^{2^{\ast}-2}u+f(x) $$  &\text{in}\  \ \Omega,\\

u=0& \text{on} \ \partial\Omega,\\
\end{cases}
\end{equation*}
where the existence and the multiplicity of solutions were proved under appropriate conditions on the size of $f$. For more recent advances on this topic, see \cite{MYL,YG,NT} and references therein.

It is also natural to study the coupled system of equations. X. He, M. Squassina and W. Zou \cite{XSZ} considered the following fractional Laplacian system with critical nonlinearities on a bounded domain in ${\mathbb R}^n
$
\begin{equation*}
\begin{cases}
(-\Delta)^{s} u= \lambda|u|^{q-2}u+{\frac{2\alpha}{\alpha+\beta}}|u|^{\alpha-2}u|v|^{\beta} $$  & \text{in} \ \ \Omega,\\
(-\Delta)^{s} v = \mu|v|^{q-2}v+{\frac{2\beta}{\alpha+\beta}}|u|^{\alpha}|v|^{\beta-2}v& \text{in} \ \ \Omega,\\

u=0,v=0& \text{on}\ \partial \Omega,
\end{cases}
\end{equation*}
using variational methods and a Nehari manifold decomposition, they prove that the system admits at least two positive solutions when the pair of parameters $(\lambda,\mu)$ belongs to certain subset of $\mathbb{R}^{2}$.

When\ $\Omega={\mathbb R}^n$, the Dirichlet-Neumann map used in  \cite{BCPS,CDS2,XSZ} provides a formula for the fractional Laplacian in the whole space, which is equivalent to that obtained from Fourier Transform
\cite{CS1}, where the operator has explicit expression,
$$
(-\Delta)^{s}u(x)=C(n,s)P.V.\int_{R^{n}}\frac{u(x)-u(y)}{|y-x|^{n+2s}}dy,
$$with

$$
C(n,s)=\left(\int_{R^{n}}\frac{1-\cos(\varsigma)}{|\varsigma|^{n+2s}}d\varsigma\right)^{-1}=2^{2s}\pi^{-\frac{n}{2}}\frac{\Gamma(\frac{n+2s}{2})}{\Gamma(2-s)}s(1-s),\quad 0<s<1.
$$

In \cite{GLZ}, Z. Guo, S. Luo and W. Zou studied the following critical system involving fractional Laplacian
\begin{equation}\label{int1}
\begin{cases}
(-\Delta)^{s}u= \mu_{1}|u|^{2^{\ast}-2}u+\frac{\alpha\gamma}{2^{\ast}}|u|^{\alpha-2}u|v|^{\beta} $$  &\text{in} \ \ \mathbb{R}^{n},\\

(-\Delta)^{s}v= \mu_{2}|v|^{2^{\ast}-2}v+\frac{\beta\gamma}{2^{\ast}}|u|^{\alpha}|v|^{\beta-2}v&\text{in} \ \ \mathbb{R}^{n},\\

u,v\in D_{s}(\mathbb{R}^{n}),
\end{cases}
\end{equation}
 they showed the existence of positive least energy solution, which is radially symmetric with respect to some point in $\mathbb{R}^{n}$\ and decays at infinity with certain rate. Q. Wang \cite{QW} studied a special case
 where $\alpha=\beta=\frac{2^{\ast}}{2}$ and\ $\frac{\alpha\gamma}{2^{\ast}}=\frac{\beta\gamma}{2^{\ast}}=\beta$,\ the author also showed the existence of positive least energy solution under suitable conditions.

 In this paper, we study the existence of the least energy solutions for the system \eqref{int1} with critical exponent. We assume

$$
 \mu_{1},\ \mu_{2}>0,\ 2^{\ast}=\frac{2n}{n-2s},\ n>2s,\ \alpha,\ \beta>1,\mbox{ and } \alpha+\beta=2^{\ast}.
$$

Let  ${D_{s}(\mathbb{R}^{n})}$\ be Hilbert space as the completion of $C^{\infty}_{c}(\mathbb{R}^{n})$
 equipped with the norm
$$
\|u\|^{2}_{D_{s}(\mathbb{R}^{n})}=\frac{C(n,s)}{2}\int_{\mathbb{R}^{n}}\int_{\mathbb{R}^{n}}\frac{|u(x)-u(y)|^{2}}{|y-x|^{n+2s}}dxdy.
$$

Let
\begin{equation}\label{int2}
S_{s}=\inf \limits_{u\in D_{s}(\mathbb{R}^{n})\setminus 0}\frac{\|u\|^{2}_{D_{s}(\mathbb{R}^{n})}}{(\int_{R^{n}}|u|^{2^{\ast}}dx)^{\frac{2}{2^{\ast}}}}
\end{equation}
be the sharp imbedding constant of ${D_{s}(\mathbb{R}^{n})}\hookrightarrow L^{2^{\ast}}(\mathbb{R}^{n})$\ \ and $S_{s}$\ is attained (see \cite{CT}) in\ $\mathbb{R}^{n}$ by \ $\widetilde{u}_{\epsilon,
y}=\kappa(\varepsilon^{2}+|x-y|)^{-\frac{n-2s}{2}}$, where $\kappa\neq 0 \in \mathbb{R},\ \varepsilon>0 $  and $y\in \mathbb{R}^{n}$. That is

$$
S_{s}=\frac{\|\widetilde{u}_{\epsilon,y}||^{2}_{D_{s}(\mathbb{R}^{n})}}{(\int_{\mathbb{R}^{n}}|\widetilde{u}_{\epsilon,y}|^{2^{\ast}}dx)^{\frac{2}{2^{\ast}}}}.
$$

We normalize $\widetilde{u}_{\epsilon,y}$ as follow, let
$$
\overline{u}_{\epsilon,y}(x)=\frac{\widetilde{u}_{\epsilon,y}(x)}{||\widetilde{u}_{\epsilon,y}||_{2^{\ast}}}.
$$

By Lemma 2.12 in \cite{GLZ},
 $U_{\varepsilon, y}(x)= (S_{s})^{\frac{1}{2^{\ast}-2}}\overline{u}_{\epsilon,y}(x)$\ is a positive ground state solution of

\begin{equation}\label{int4}
(-\Delta)^{s}u= |u|^{2^{\ast}-2}u \ \ \text{in}\ \mathbb{R}^{n}
\end{equation}
and

\begin{equation}\label{int5}
||U_{\varepsilon, y}||^{2}_{D_{s}(\mathbb{R}^{n})}=\int_{\mathbb{R}^{n}}|U_{\varepsilon, y}|^{2^{\ast}}dx=(S_{s})^{\frac{n}{2s}}.
\end{equation}

Note that, the energy functional associated with \eqref{int1} is given by
$$
E(u,v)=\frac{1}{2}(||u||^{2}_{D_{s}(\mathbb{R}^{n})}+||v||^{2}_{D_{s}(\mathbb{R}^{n})})-\frac{1}{2^{\ast}}\int_{\mathbb{R}^{n}}(\mu_{1}|u|^{2^{\ast}}+\mu_{2}|v|^{2^{\ast}}+\gamma|u|^{\alpha}|v|^{\beta})dx.
$$

Define the Nehari manifold
\begin{align*}
\mathbb{N}=\{(u,v)&\in D_{s}(\mathbb{R}^{n})\times D_{s}(\mathbb{R}^{n}):u\neq 0,v\neq 0,\\
            &||u||^{2}_{D_{s}(\mathbb{R}^{n})}=\int_{\mathbb{R}^{n}}(\mu_{1}|u|^{2^{\ast}}+\frac{\alpha\gamma}{2^{\ast}}|u|^{\alpha}|v|^{\beta})dx,\\
            &||v||^{2}_{D_{s}(\mathbb{R}^{n})}=\int_{\mathbb{R}^{n}}(\mu_{2}|v|^{2^{\ast}}+\frac{\beta\gamma}{2^{\ast}}|u|^{\alpha}|v|^{\beta})dx\}.
\end{align*}

\begin{align*}
A:=\inf \limits_{(u,v)\in \mathbb{N}}E(u,v)&=\inf \limits_{(u,v)\in \mathbb{N}}\frac{s}{n}(||u||^{2}_{D_{s}(\mathbb{R}^{n})}+||v||^{2}_{D_{s}(\mathbb{R}^{n})})\\
                                          &=\inf \limits_{(u,v)\in \mathbb{N}}\frac{s}{n}\int_{\mathbb{R}^{n}}(\mu_{1}|u|^{2^{\ast}}+\mu_{2}|v|^{2^{\ast}}+\gamma|u|^{\alpha}|v|^{\beta})dx.\\
\end{align*}

Finally, we say that $(u,v)$ is a nontrivial solution of \eqref{int1} if $u\neq0,v\neq0$ and $(u,v)$ solves \eqref{int1}. Any nontrivial solution of \eqref{int1}\ is in $\mathbb{N}$. It is easy to see that when the
following algebra system \eqref{int6} has a solution $(k,l)$, then $(\sqrt{k}U_{\varepsilon, y},\sqrt{l}U_{\varepsilon, y})$ is a nontrivial solution of \eqref{int1}.

In this paper, we get the existence and nonexistence of least energy solutions of \eqref{int1} under certain conditions of $\gamma$, $n$ and $s$. Our existence results strongly depend on the following algebra system

\begin{equation}\label{int6}
\begin{cases}
\mu_{1}k^\frac{{2^{\ast}-2}}{2}+\frac{\alpha\gamma}{2^{\ast}}k^\frac{{\alpha-2}}{2}l^\frac{{\beta}}{2}= 1,\\

\mu_{2}l^\frac{{2^{\ast}-2}}{2}+\frac{\beta\gamma}{2^{\ast}}k^\frac{{\alpha}}{2}l^\frac{{\beta-2}}{2}=1,\\

k,l>0.
\end{cases}
\end{equation}

Our main results are:
\begin{theorem}\label{Th1}
 If $\gamma< 0,$\ \ then $A=\frac{s}{n}(\mu_{1}^{-\frac{n-2s}{2s}}+\mu_{2}^{-\frac{n-2s}{2s}})S^{\frac{n}{2s}}_{s}$\ and $A$ is not attained.
\end{theorem}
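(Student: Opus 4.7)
The plan is to sandwich $A$ between matching upper and lower bounds using the Sobolev inequality \eqref{int2}, and then rule out attainment by a rigidity argument on the equality cases. For the lower bound, fix any $(u,v)\in\mathbb{N}$. Because $\gamma<0$ and $\int_{\mathbb{R}^n}|u|^\alpha|v|^\beta\,dx\geq 0$, the first Nehari identity forces
$$\|u\|^2_{D_s(\mathbb{R}^n)}\leq \mu_1\int_{\mathbb{R}^n}|u|^{2^\ast}\,dx,$$
and combining this with \eqref{int2} yields $\|u\|^2_{D_s(\mathbb{R}^n)}\geq \mu_1^{-(n-2s)/(2s)}S_s^{n/(2s)}$. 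The same reasoning gives the analogous bound for $v$. Summing and using the identity $E(u,v)=\tfrac{s}{n}(\|u\|^2_{D_s(\mathbb{R}^n)}+\|v\|^2_{D_s(\mathbb{R}^n)})$ valid on $\mathbb{N}$, one obtains
$$A\geq \tfrac{s}{n}\bigl(\mu_1^{-(n-2s)/(2s)}+\mu_2^{-(n-2s)/(2s)}\bigr)S_s^{n/(2s)}.$$

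For the matching upper bound, I would construct pairs in $\mathbb{N}$ built from two Talenti bubbles placed at widely separated points, so that the coupling asymptotically decouples. Fix $\varepsilon>0$ and $y_1,y_2\in\mathbb{R}^n$ with $R:=|y_1-y_2|$, and seek $(u_R,v_R)=(\tau_1 U_{\varepsilon,y_1},\tau_2 U_{\varepsilon,y_2})$ lying in $\mathbb{N}$. Thanks to \eqref{int5} and translation invariance, the two Nehari identities reduce to a $2\times 2$ algebraic system in $(\tau_1,\tau_2)$ whose only coupling is through
$$C(R):=\int_{\mathbb{R}^n}|U_{\varepsilon,y_1}|^\alpha|U_{\varepsilon,y_2}|^\beta\,dx.$$
Using the decay $U_{\varepsilon,y}(x)\lesssim|x-y|^{-(n-2s)}$ at infinity, a direct estimate shows $C(R)\to 0$ as $R\to\infty$, so a continuity (or implicit function) argument around the unperturbed solution $\tau_i^{2^\ast-2}=\mu_i^{-1}$ produces $\tau_i=\tau_i(R)>0$ with $\tau_i(R)\to\mu_i^{-1/(2^\ast-2)}$. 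The resulting pairs lie in $\mathbb{N}$ and, using \eqref{int5} once more, their energies tend to the claimed value of $A$. This upper bound construction is the main technical obstacle: one has to quantify the decay of $C(R)$ (via splitting the integration region near the two concentration points) and invert the perturbed algebraic system with uniform control on $(\tau_1,\tau_2)$.

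For non-attainment, suppose toward a contradiction that $A$ is attained at some $(u_0,v_0)\in\mathbb{N}$. Then equality must hold throughout the inequalities used for the lower bound, which forces two conditions simultaneously: first, $\int_{\mathbb{R}^n}|u_0|^\alpha|v_0|^\beta\,dx=0$, so that the coupling term in each Nehari identity vanishes; second, both $|u_0|$ and $|v_0|$ achieve the Sobolev constant $S_s$ in \eqref{int2}. By the classification of extremals of the fractional Sobolev inequality, each of $|u_0|$ and $|v_0|$ must then be a positive scalar multiple of a Talenti bubble $\widetilde{u}_{\varepsilon_i,y_i}$, and is therefore strictly positive almost everywhere on $\mathbb{R}^n$. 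Consequently $|u_0|^\alpha|v_0|^\beta>0$ a.e., contradicting the vanishing of its integral. Hence $A$ is not attained.
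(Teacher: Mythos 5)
Your lower bound and your two-bubble upper bound are essentially the paper's own argument: the paper takes $U=w_{\mu_1}$, $V_R=w_{\mu_2}(\cdot+Re_1)$, shows $\int_{\mathbb{R}^n}U^\alpha V_R^\beta\,dx\to0$ as $R\to\infty$ (via H\"older and weak convergence rather than your pointwise decay estimate, but to the same effect), and then solves the perturbed $2\times2$ algebraic system near $(1,1)$ by Taylor expansion plus Brouwer's fixed point theorem --- the same continuity/implicit-function step you propose, so that part is the same route. Where you genuinely diverge is the non-attainment. The paper first proves (Lemma \ref{lem2}) that a minimizer on $\mathbb{N}$ is a true critical point of $E$ (the Lagrange multipliers vanish because, for $\gamma<0$, the relevant $2\times2$ matrix has positive determinant), and then uses the strong maximum principle to conclude $u>0$, $v>0$, hence $\int|u|^\alpha|v|^\beta\,dx>0$; this makes the Nehari inequalities strict and forces $E(u,v)>\frac{s}{n}\bigl(\mu_1^{-\frac{n-2s}{2s}}+\mu_2^{-\frac{n-2s}{2s}}\bigr)S_s^{\frac{n}{2s}}$, a contradiction. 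You instead run an equality-case rigidity argument: attainment forces each of $\|u_0\|^2$, $\|v_0\|^2$ to equal its individual lower bound, hence simultaneously the coupling integral vanishes and $u_0$, $v_0$ are extremizers of \eqref{int2}, which are positive bubbles, contradicting the vanishing of the coupling term. This is correct and has the advantage of bypassing the natural-constraint/Lagrange-multiplier lemma entirely, but it imports the classification (or at least the a.e.\ positivity) of the extremizers of the fractional Sobolev inequality, which is a standard fact yet strictly more than the attainment statement the paper cites from \cite{CT}; if you want to stay within the paper's toolbox, note that $|u_0|$ is again an extremizer, hence solves the associated Euler--Lagrange equation, and the same strong maximum principle the paper invokes gives $|u_0|>0$ (likewise for $v_0$), after which your contradiction goes through unchanged.
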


\begin{theorem}\label{Th2}
If $2s\leq n\leq 4s,\ \alpha>2,\ \beta >2$\ and
\begin{equation}\label{int7}
 0<\gamma \leq \frac{4ns}{(n-2s)^{2}}min\{\frac{\mu_{1}}{\alpha}(\frac{\alpha-2}{\beta-2})^{\frac{\beta-2}{2}},\frac{\mu_{2}}{\beta}(\frac{\alpha-2}{\beta-2})^{\frac{\alpha-2}{2}}\},
 \end{equation}
or\ $n>4s,\ 1<\alpha,\beta<2$\ and
\begin{equation}\label{int8}
\gamma\geq  \frac{4ns}{(n-2s)^{2}}max\{\frac{\mu_{1}}{\alpha}(\frac{2-\beta}{2-\alpha})^{\frac{2-\beta}{2}},\frac{\mu_{2}}{\beta}(\frac{2-\alpha}{2-\beta})^{\frac{2-\alpha}{2}}\},
\end{equation}
then\ $A=\frac{s}{n}(k_{0}+l_{0})S_{s}^{\frac{n}{2s}}$\ and $A$ is attained by $(\sqrt{k_{0}}U_{\varepsilon, y},\sqrt{l_{0}}U_{\varepsilon, y})$,\ where\ $(k_{0},l_{0})$\ satisfies \eqref{int6}\ and\  $k_{0}=min\{k : (k,l)\
satisfies \ \eqref{int6}\}$.
\end{theorem}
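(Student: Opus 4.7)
The plan is to establish matching upper and lower bounds for $A$ and to exhibit a minimizer.

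\textbf{Upper bound.} I will test $E$ at the pair $(\sqrt{k_0}\,U_{\varepsilon,y},\sqrt{l_0}\,U_{\varepsilon,y})$. From \eqref{int5} one has $\|\sqrt{k_0}\,U_{\varepsilon,y}\|^2_{D_s(\mathbb{R}^n)}=k_0 S_s^{n/(2s)}$, $\int|\sqrt{k_0}\,U_{\varepsilon,y}|^{2^{\ast}}dx=k_0^{2^{\ast}/2}S_s^{n/(2s)}$, and $\int|\sqrt{k_0}\,U_{\varepsilon,y}|^{\alpha}|\sqrt{l_0}\,U_{\varepsilon,y}|^{\beta}dx=k_0^{\alpha/2}l_0^{\beta/2}S_s^{n/(2s)}$. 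Substituting into the two Nehari identities and dividing by $k_0 S_s^{n/(2s)}$ and $l_0 S_s^{n/(2s)}$ respectively yields exactly the two equations of \eqref{int6}, so the test pair lies in $\mathbb{N}$. Since $E(u,v)=\frac{s}{n}(\|u\|^2_{D_s(\mathbb{R}^n)}+\|v\|^2_{D_s(\mathbb{R}^n)})$ on $\mathbb{N}$, we obtain $A\leq\frac{s}{n}(k_0+l_0)S_s^{n/(2s)}$.

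\textbf{Lower bound.} Fix any $(u,v)\in\mathbb{N}$ and set $k=\|u\|^2_{D_s(\mathbb{R}^n)}/S_s^{n/(2s)}$, $l=\|v\|^2_{D_s(\mathbb{R}^n)}/S_s^{n/(2s)}$. The sharp Sobolev inequality \eqref{int2} gives $\int|u|^{2^{\ast}}\leq k^{2^{\ast}/2}S_s^{n/(2s)}$ (similarly for $v$), and H\"older with exponents $2^{\ast}/\alpha,2^{\ast}/\beta$ gives $\int|u|^{\alpha}|v|^{\beta}\leq k^{\alpha/2}l^{\beta/2}S_s^{n/(2s)}$. Since $\gamma>0$ under either hypothesis, substituting these upper bounds into the Nehari identities and dividing produces
\begin{align*}
\mu_1 k^{(2^{\ast}-2)/2}+\tfrac{\alpha\gamma}{2^{\ast}}k^{(\alpha-2)/2}l^{\beta/2} &\geq 1, \\
\mu_2 l^{(2^{\ast}-2)/2}+\tfrac{\beta\gamma}{2^{\ast}}k^{\alpha/2}l^{(\beta-2)/2} &\geq 1.
\end{align*}
Thus $E(u,v)=\frac{s}{n}(k+l)S_s^{n/(2s)}$ with $(k,l)$ in the feasible region $\mathcal{R}$ cut out by these two inequalities.

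\textbf{Main obstacle.} The crux is to show $\inf_{(k,l)\in\mathcal{R}}(k+l)=k_0+l_0$. I plan to treat the two hypothesis cases separately, since the geometry of $\mathcal{R}$ differs. In Case 1 ($\alpha,\beta>2$) all exponents in both constraint functions are nonnegative, so each is strictly increasing in both $k$ and $l$; the feasible region lies ``above'' two monotone curves, and one checks that eliminating $l$ from the two equalities in \eqref{int6} reduces to a single equation in $k$ whose positive roots exist precisely when \eqref{int7} holds. The smallest such root is $k_0$, and the corresponding $l_0$ makes the corner $(k_0,l_0)$ realize the infimum of $k+l$. In Case 2 ($1<\alpha,\beta<2$) the cross-exponents $(\alpha-2)/2,(\beta-2)/2$ are negative, which reverses monotonicity in the cross variables; this is exactly why the hypothesis on $\gamma$ is reversed to a lower bound in \eqref{int8}. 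A parallel algebraic reduction produces a one-variable equation in $k$ whose roots again exist under the stated hypothesis, giving the corner $(k_0,l_0)$. The threshold $\frac{4ns}{(n-2s)^2}=\frac{2^{\ast}(2^{\ast}-2)}{2}$ is forced on us in both cases: it is the sharp constant at which the reduced one-variable problem transitions between having and not having the requisite positive root.

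\textbf{Conclusion.} Combining the bounds gives $A=\frac{s}{n}(k_0+l_0)S_s^{n/(2s)}$, and the upper-bound test pair realizes it. Moreover, equality in the Sobolev and H\"older estimates above forces any minimizer to have $u$ and $v$ both of the form $c\,U_{\varepsilon,y}$ with a common pair of parameters, which (together with the Nehari constraints) identifies the minimizer as $(\sqrt{k_0}U_{\varepsilon,y},\sqrt{l_0}U_{\varepsilon,y})$.
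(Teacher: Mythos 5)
Your skeleton coincides with the paper's: the upper bound comes from checking that $(\sqrt{k_0}U_{\varepsilon,y},\sqrt{l_0}U_{\varepsilon,y})\in\mathbb{N}$, and the lower bound from Sobolev plus H\"older, which turns the Nehari constraints for any $(u,v)\in\mathbb{N}$ into the algebraic inequalities $F_1(k,l)\ge 0,\ F_2(k,l)\ge 0$. Up to that point your argument is sound. The gap is that the step you yourself call the crux, namely $\inf\{k+l:\ F_1(k,l)\ge0,\ F_2(k,l)\ge0,\ k,l>0\}=k_0+l_0$, is only asserted, and the mechanism you propose for it is wrong. The hypotheses \eqref{int7} and \eqref{int8} are \emph{not} the conditions under which the reduced one-variable equation (equivalently \eqref{int6}) has positive roots: in both exponent regimes \eqref{int6} is solvable for every $\gamma>0$ by an intermediate value argument (this is the paper's Lemma 2.5 for $1<\alpha,\beta<2$, and for $\alpha,\beta>2$ one sees it from the crossing of $f_1$ and $f_2$ below), and Theorem 1.3 shows explicitly that for small $\gamma>0$ the resulting pair $(\sqrt{k(\gamma)}U,\sqrt{l(\gamma)}U)$ has energy strictly above $A$, so ``roots exist $\Leftrightarrow$ hypothesis'' cannot be the right dichotomy. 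Likewise, in the case $\alpha,\beta>2$ the fact that both constraint functions are increasing in $k$ and $l$ does not force the minimum of $k+l$ over the intersection of the two super-level sets to sit at the corner: the line $k+l=\mathrm{const}$ could be tangent to one boundary curve at a point where the other constraint is slack.

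What the thresholds actually buy, and what your proposal is missing, is quantitative monotonicity. For $2s<n<4s,\ \alpha,\beta>2$ the paper (Lemma 2.9) substitutes $y=k+l$, $x=k/l$, so the constraints read $y^{(2^{\ast}-2)/2}\ge\max\{f_1(x),f_2(x)\}$ with
$f_1(x)=(x+1)^{(2^{\ast}-2)/2}\big(\mu_1x^{(2^{\ast}-2)/2}+\tfrac{\alpha\gamma}{2^{\ast}}x^{(\alpha-2)/2}\big)^{-1}$ and
$f_2(x)=(x+1)^{(2^{\ast}-2)/2}\big(\mu_2+\tfrac{\beta\gamma}{2^{\ast}}x^{\alpha/2}\big)^{-1}$;
condition \eqref{int7} is exactly what makes $g_1\le 0$ and $g_2\ge 0$ there, i.e.\ $f_1$ strictly decreasing and $f_2$ strictly increasing, so $\max\{f_1,f_2\}$ is minimized at their crossing $x_0$ and $k+l\ge k_0+l_0$ (this also gives the uniqueness of the solution of \eqref{int6} in that case). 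For $n>4s,\ 1<\alpha,\beta<2$, condition \eqref{int8} is exactly what makes $\min l'(k)\ge-1$ and $\min k'(l)\ge-1$ (computed at the inflection points in Lemma 2.6), hence $k+l(k)$ and $l+k(l)$ increasing, and this feeds the comparison/uniqueness argument of Lemmas 2.7--2.8 that identifies $(k_0,l_0)$ as the minimizer of $k+l$ on the feasible region. Without supplying arguments of this kind your lower bound does not close; the final sentence about uniqueness of the minimizer is also unnecessary for the statement and would need its own equality-case analysis.
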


\begin{theorem}\label{Th3}
Assume\ $ n >4s $\ and\  $1<\alpha,\beta<2 $\ hold, there exists a
$$
\gamma_{1}\in(0,\frac{4ns}{(n-2s)^{2}}max\{\frac{\mu_{1}}{\alpha}(\frac{2-\beta}{2-\alpha})^{\frac{2-\beta}{2}},\frac{\mu_{2}}{\beta}(\frac{2-\alpha}{2-\beta})^{\frac{2-\alpha}{2}}],
$$
such that for any \ $\gamma\in(0,\gamma_{1})$,\ there exists a solution \ $(k(\gamma),l(\gamma))$\ of \eqref{int6}, satisfying $E(\sqrt{k(\gamma)}U_{\varepsilon, y},\sqrt{l(\gamma)}U_{\varepsilon, y})>A$ and\
$(\sqrt{k(\gamma)}U_{\varepsilon, y},\sqrt{l(\gamma)}U_{\varepsilon, y})$\ is a positive solution of \eqref{int1}.
\end{theorem}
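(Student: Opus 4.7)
The plan is to produce $(k(\gamma),l(\gamma))$ by continuation from the decoupled $\gamma=0$ equilibrium of \eqref{int6} via the implicit function theorem, and then to certify this branch as an \emph{excited} (non-ground-state) solution by exhibiting other roots of \eqref{int6} whose Nehari image has strictly smaller energy. Let $F:(0,\infty)^{2}\times\mathbb{R}\to\mathbb{R}^{2}$ be the left-hand sides of \eqref{int6} minus $1$. At $\gamma=0$ the system decouples and has the unique positive solution $(k_{0},l_{0})=\bigl(\mu_{1}^{-(n-2s)/(2s)},\mu_{2}^{-(n-2s)/(2s)}\bigr)$, and the Jacobian $\partial F/\partial(k,l)$ at $(k_{0},l_{0},0)$ is diagonal with strictly positive entries $\tfrac{2^{\ast}-2}{2}\mu_{1}k_{0}^{(2^{\ast}-4)/2}$ and $\tfrac{2^{\ast}-2}{2}\mu_{2}l_{0}^{(2^{\ast}-4)/2}$, hence invertible. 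The implicit function theorem thus yields a smooth branch $\gamma\mapsto(k(\gamma),l(\gamma))$ of positive solutions of \eqref{int6} on some $[0,\gamma_{1})$. Since $U_{\varepsilon,y}$ solves \eqref{int4}, direct substitution shows that $(\sqrt{k(\gamma)}U_{\varepsilon,y},\sqrt{l(\gamma)}U_{\varepsilon,y})$ is a positive solution of \eqref{int1} precisely because $(k(\gamma),l(\gamma))$ satisfies \eqref{int6}.

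The substantive part is the strict inequality $E>A$. The IFT solution has energy $\tfrac{s}{n}(k(\gamma)+l(\gamma))S_{s}^{n/(2s)}\to\tfrac{s}{n}\bigl(\mu_{1}^{-(n-2s)/(2s)}+\mu_{2}^{-(n-2s)/(2s)}\bigr)S_{s}^{n/(2s)}$ as $\gamma\to 0^{+}$. Parametrizing $l=tk$ in \eqref{int6} reduces both equations to $k^{(2^{\ast}-2)/2}=[\mu_{1}+\tfrac{\alpha\gamma}{2^{\ast}}t^{\beta/2}]^{-1}=[\mu_{2}t^{(2^{\ast}-2)/2}+\tfrac{\beta\gamma}{2^{\ast}}t^{(\beta-2)/2}]^{-1}$, whose compatibility yields
\[
g(t):=\mu_{2}t^{(2^{\ast}-2)/2}+\tfrac{\beta\gamma}{2^{\ast}}t^{(\beta-2)/2}-\tfrac{\alpha\gamma}{2^{\ast}}t^{\beta/2}-\mu_{1}=0.
\]
The hypotheses $1<\alpha,\beta<2$ and $n>4s$ give $(\beta-2)/2<0$ and $\beta/2>(2^{\ast}-2)/2$, so $g(0^{+})=+\infty$ and $g(+\infty)=-\infty$. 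For $\gamma=0$ the sole zero is $t_{0}:=(\mu_{1}/\mu_{2})^{(n-2s)/(2s)}$, which persists as the IFT root; for small $\gamma>0$ the intermediate value theorem forces two further zeros $t_{\ast}(\gamma)<t_{0}<t^{\ast}(\gamma)$ with $t_{\ast}\to 0^{+}$ and $t^{\ast}\to+\infty$ as $\gamma\to 0^{+}$. Each extra zero produces a second solution $(\widetilde k,\widetilde l)$ of \eqref{int6} whose Nehari image has energy tending, respectively, to $\tfrac{s}{n}\mu_{1}^{-(n-2s)/(2s)}S_{s}^{n/(2s)}$ (via $\widetilde l\to 0$) and $\tfrac{s}{n}\mu_{2}^{-(n-2s)/(2s)}S_{s}^{n/(2s)}$ (via $\widetilde k\to 0$), both strictly below the IFT limit. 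Therefore $A<E(\sqrt{k(\gamma)}U_{\varepsilon,y},\sqrt{l(\gamma)}U_{\varepsilon,y})$ for all sufficiently small $\gamma>0$, and $\gamma_{1}$ is defined as the supremum of such $\gamma$; the inclusion $\gamma_{1}\in(0,\tfrac{4ns}{(n-2s)^{2}}\max\{\cdots\}]$ follows because at the right endpoint Theorem~\ref{Th2} forces $E=A$, closing the gap.

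The main obstacle is the energy comparison: producing the competing branches via a careful qualitative study of $g$, tracking their asymptotics to establish uniform strict separation of energies, and verifying that the competitors $(\widetilde k,\widetilde l)$ remain genuinely positive throughout $(0,\gamma_{1})$. Matching the extremal value of $\gamma_{1}$ with the threshold from Theorem~\ref{Th2} is also a delicate continuity argument that requires understanding how the three branches evolve and merge as $\gamma$ increases toward the threshold.
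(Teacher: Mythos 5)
Your proposal is correct in outline, and its first half is exactly the paper's argument: the implicit function theorem applied to $F_{1},F_{2}$ at the decoupled point $\bigl(\mu_{1}^{-2/(2^{\ast}-2)},\mu_{2}^{-2/(2^{\ast}-2)},0\bigr)$, where the Jacobian is diagonal and positive, gives the branch $(k(\gamma),l(\gamma))$ and hence the positive solution $(\sqrt{k(\gamma)}U_{\varepsilon,y},\sqrt{l(\gamma)}U_{\varepsilon,y})$. Where you genuinely diverge is the inequality $E>A$. The paper does not build competitors at all: it invokes the Guo--Luo--Zou result (their Theorem 1.1 and the estimate after their Lemma 3.1, quoted as Theorem 5.1 and Lemma 5.2) that $A<\min\{\tfrac{s}{n}\mu_{1}^{-\frac{n-2s}{2s}},\tfrac{s}{n}\mu_{2}^{-\frac{n-2s}{2s}}\}S_{s}^{\frac{n}{2s}}$, together with the remark that both Nehari manifolds give the same infimum, and then only needs $k(\gamma)+l(\gamma)>\min\{\mu_{1}^{-\frac{n-2s}{2s}},\mu_{2}^{-\frac{n-2s}{2s}}\}$ near $\gamma=0$, which is immediate by continuity. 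You instead manufacture competitors inside $\mathbb{N}$ from extra roots of the reduced scalar equation $g(t)=0$ (with $l=tk$); since any positive solution of \eqref{int6} yields an element of $\mathbb{N}$ with energy $\tfrac{s}{n}(\widetilde k+\widetilde l)S_{s}^{\frac{n}{2s}}$, one such root with $\widetilde k+\widetilde l$ strictly below $k(\gamma)+l(\gamma)$ already gives $A<E$. This buys self-containedness: you never need the ground-state existence theorem of \cite{GLZ} nor the reconciliation of the two Nehari manifolds, only elementary calculus on $g$. The price is that the step you yourself flag as the ``main obstacle'' must actually be carried out; it is short: fix $t_{1}\in(0,t_{0})$, note $g(t_{1})=\mu_{2}t_{1}^{\frac{2^{\ast}-2}{2}}-\mu_{1}+O(\gamma)<0$ for small $\gamma$ while $g(0^{+})=+\infty$, so there is a root $t_{*}\in(0,t_{1})$; then $\widetilde k\le\mu_{1}^{-\frac{2}{2^{\ast}-2}}$ and $\widetilde l=t_{*}\widetilde k$ give $\widetilde k+\widetilde l\le(1+t_{1})\mu_{1}^{-\frac{n-2s}{2s}}<\mu_{1}^{-\frac{n-2s}{2s}}+\mu_{2}^{-\frac{n-2s}{2s}}$, which beats $k(\gamma)+l(\gamma)$ for $\gamma$ small. (Only one extra root is needed; the large root $t^{\ast}(\gamma)$ is superfluous.) Finally, your closing appeal to Theorem \ref{Th2} ``forcing $E=A$ at the endpoint'' is both unproved and unnecessary: the theorem only asserts existence of some $\gamma_{1}$ in the stated interval, so one simply replaces your $\gamma_{1}$ by $\min\{\gamma_{1},\gamma_{0}\}$.
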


\begin{remark}
Z. Guo, S. Luo and W. Zou \cite{GLZ} already showed the existence of positive least energy solutions for the system \eqref{int1} with $n>4s$ and $1<\alpha,\ \beta<2$ for all $\gamma>0$.  Theorem \ref{Th2} and Theorem
\ref{Th3} tell that if
$$
\gamma>=\gamma_0:\triangleq\frac{4ns}{(n-2s)^{2}}max\{\frac{\mu_{1}}{\alpha}(\frac{2-\beta}{2-\alpha})^{\frac{2-\beta}{2}},\frac{\mu_{2}}{\beta}(\frac{2-\alpha}{2-\beta})^{\frac{2-\alpha}{2}}\},
$$
the positive least energy solution of \eqref{int1} has to have the form\ $(\sqrt{k(\gamma)}U_{\varepsilon, y},\sqrt{l(\gamma)}U_{\varepsilon, y})$;
whereas, if $0<\gamma<\gamma_1$, \ $(\sqrt{k(\gamma)}U_{\varepsilon, y},\sqrt{l(\gamma)}U_{\varepsilon, y})$\ is not the least energy solution. However, it should be interesting to know whether $\gamma_1=\gamma_0$ or not.
If $\gamma_1\neq\gamma_0$, what happens when $\gamma\in [\gamma_1,\gamma_0]$.
\end{remark}

The paper is organized as follows. In section \ref{S2}, we introduce some preliminaries that will be used to prove theorems. In section \ref{S3}, we prove Theorem \ref{Th1}.  In section \ref{S4}, we prove Theorem \ref{Th2}.
The proof of Theorem \ref{Th3} is given in section \ref{S5}.

\section{Some Preliminaries }\label{S2}
For the case of $\gamma<0$, the following Lemma \ref{lem2} shows that if the energy functional attains its minimum at some point\ $(u,v)\in\mathbb{N}$,\ then\ $(u,v)$\ is a nontrivial solution of \eqref{int1}.

\begin{lemma}\label{lem2}
Assume $\gamma< 0,$ if $A$ is attained by a couple\ $(u,v)\in\mathbb{N},$ then\ (u,v)\  is a nontrivial solution of \eqref{int1}.
\end{lemma}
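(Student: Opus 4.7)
The plan is to use the Lagrange multiplier rule on the Nehari manifold and show that the multipliers must vanish when $\gamma<0$, so that $(u,v)$ is automatically a free critical point of $E$. Write the two defining functionals of $\mathbb{N}$ as
$G_1(u,v)=\|u\|^2_{D_s}-\int_{\mathbb{R}^n}(\mu_1|u|^{2^\ast}+\tfrac{\alpha\gamma}{2^\ast}|u|^\alpha|v|^\beta)\,dx$ and
$G_2(u,v)=\|v\|^2_{D_s}-\int_{\mathbb{R}^n}(\mu_2|v|^{2^\ast}+\tfrac{\beta\gamma}{2^\ast}|u|^\alpha|v|^\beta)\,dx$.
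A minimizer $(u,v)$ of $E$ on $\mathbb{N}=\{G_1=G_2=0,\,u\neq 0,\,v\neq 0\}$ yields multipliers $\lambda_1,\lambda_2$ with $E'(u,v)=\lambda_1 G_1'(u,v)+\lambda_2 G_2'(u,v)$, provided the differentials $G_1',G_2'$ are linearly independent at $(u,v)$; this nondegeneracy follows from the same determinant computation below.

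Next I would test this identity against the directions $(u,0)$ and $(0,v)$. Since $(u,v)\in\mathbb{N}$ we have $\langle E'(u,v),(u,0)\rangle=G_1(u,v)=0$ and $\langle E'(u,v),(0,v)\rangle=G_2(u,v)=0$, so the pairing reduces to the $2\times 2$ linear system
\begin{equation*}
\begin{pmatrix} A_1 & -B \\ -B & A_2 \end{pmatrix}\begin{pmatrix}\lambda_1\\ \lambda_2\end{pmatrix}=\begin{pmatrix}0\\ 0\end{pmatrix},
\end{equation*}
where direct differentiation gives $B=\tfrac{\alpha\beta\gamma}{2^\ast}\int|u|^\alpha|v|^\beta\,dx$, together with $A_1=(2-2^\ast)\mu_1\int|u|^{2^\ast}+\tfrac{\alpha(2-\alpha)\gamma}{2^\ast}\int|u|^\alpha|v|^\beta$ and analogously for $A_2$. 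Using the Nehari identities $\mu_1\int|u|^{2^\ast}=\|u\|^2_{D_s}-\tfrac{\alpha\gamma}{2^\ast}\int|u|^\alpha|v|^\beta$ and its counterpart for $v$, a short algebraic simplification collapses these to the clean form $A_1=(2-2^\ast)\|u\|^2_{D_s}+B$ and $A_2=(2-2^\ast)\|v\|^2_{D_s}+B$.

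The main point is now to verify invertibility of the coefficient matrix, and this is exactly where the hypothesis $\gamma<0$ enters. Computing the determinant yields
\begin{equation*}
A_1A_2-B^2=(2-2^\ast)^2\|u\|^2_{D_s}\|v\|^2_{D_s}+(2-2^\ast)B\bigl(\|u\|^2_{D_s}+\|v\|^2_{D_s}\bigr).
\end{equation*}
Since $2-2^\ast<0$ and $\gamma<0$ forces $B<0$, the second summand is strictly positive, and the first is nonnegative (indeed strictly positive since $u,v\neq 0$); hence $A_1A_2-B^2>0$. Therefore $\lambda_1=\lambda_2=0$, so $E'(u,v)=0$ and $(u,v)$ solves \eqref{int1}. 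The only delicate step is the sign analysis of the determinant: the cross term $B$ a priori has an ambiguous effect, but after substituting the Nehari relations it combines with $(2-2^\ast)$ in a strictly sign-definite way precisely when $\gamma$ is negative.
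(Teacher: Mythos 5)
Your proposal is correct and follows essentially the same route as the paper: Lagrange multipliers for the two constraints $G_1,G_2$, testing against $(u,0)$ and $(0,v)$ (where $E'$ pairs to zero by the Nehari identities), and showing the resulting $2\times 2$ coefficient matrix has positive determinant when $\gamma<0$, forcing $\lambda_1=\lambda_2=0$. Your simplification $A_1=(2-2^\ast)\|u\|^2_{D_s}+B$, $A_2=(2-2^\ast)\|v\|^2_{D_s}+B$ is just a cleaner packaging of the paper's determinant estimate (and note $B\le 0$ rather than $B<0$ if $u,v$ had disjoint supports, which does not affect the positivity of the determinant).
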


\begin{proof}
Define

\begin{align*}
 \mathbb{N}_{1}=\{&(u,v)\in D_{s}(\mathbb{R}^{n})\times D_{s}(\mathbb{R}^{n}):u\neq 0,v\neq  0\\
                 &G_{1}(u,v)=||u||^{2}_{D_{s}(\mathbb{R}^{n})}-\int_{\mathbb{R}^{n}}(\mu_{1}|u|^{2^{\ast}}+\frac{\alpha\gamma}{2^{\ast}}|u|^{\alpha}|v|^{\beta})dx=0\},\\
 \mathbb{N}_{2}=\{&(u,v)\in D_{s}(\mathbb{R}^{n})\times D_{s}(\mathbb{R}^{n}):u\neq 0,v\neq 0\\
                 &G_{2}(u,v)=||v||^{2}_{D_{s}(\mathbb{R}^{n})}-\int_{\mathbb{R}^{n}}(\mu_{2}|v|^{2^{\ast}}+\frac{\beta\gamma}{2^{\ast}}|u|^{\alpha}|v|^{\beta})dx=0\}.\\
\end{align*}

Then $\mathbb{N}=\mathbb{N}_{1}\bigcap \mathbb{N}_{2}.$ By the Fr\'{e}chet derivative, for any $\varphi_1,\varphi_2\in D_s({\mathbb R^n})$, we have
\begin{align*}
E'(u,v)(\varphi_{1},\varphi_{2})     &=(u,\varphi_{1})+(v,\varphi_{2})\\
                                     &-\int_{\mathbb{R}^{n}}(\mu_{1}|u|^{2^{\ast}-2}u\varphi_{1}+\mu_{2}|v|^{2^{\ast}-2}v\varphi_{2})dx\\
                                     &-\frac{\gamma}{2^{\ast}}\int_{\mathbb{R}^{n}}(\alpha|u|^{\alpha-2}u\varphi_{1}|v|^{\beta}+\beta|u|^{\alpha}|v|^{\beta-2}v\varphi_{2})dx,
\end{align*}

\begin{align*}
 G'_{1}(u,v)(\varphi_{1},\varphi_{2})&=2(u,\varphi_{1})-2^{\ast}\int_{\mathbb{R}^{n}}\mu_{1}|u|^{2^{\ast}-2}u\varphi_{1}dx\\
                                     &-\frac{\alpha\gamma}{2^{\ast}}\int_{\mathbb{R}^{n}}(\alpha|u|^{\alpha-2}u\varphi_{1}|v|^{\beta}+\beta|u|^{\alpha}|v|^{\beta-2}v\varphi_{2})dx,\\
 G'_{2}(u,v)(\varphi_{1},\varphi_{2})&=2(v,\varphi_{2})-2^{\ast}\int_{\mathbb{R}^{n}}\mu_{2}|v|^{2^{\ast}-2}v\varphi_{2}dx\\
                                     &-\frac{\beta\gamma}{2^{\ast}}\int_{\mathbb{R}^{n}}(\alpha|u|^{\alpha-2}u\varphi_{1}|v|^{\beta}+\beta|u|^{\alpha}|v|^{\beta-2}v\varphi_{2})dx,\\
\end{align*}
where

$$
(u,\varphi_{1})=\frac{C(n,\alpha)}{2}\int_{\mathbb{R}^{n}}\int_{\mathbb{R}^{n}}\frac{(u(x)-u(y))(\varphi_{1}(x)-\varphi_{1}(y))}{|y-x|^{n+2s}}dxdy.
$$

Then
$$
E'(u,v)(u,0)=G_{1}(u,v)=0,
$$

$$
E'(u,v)(0,v)=G_{2}(u,v)=0,
$$

\begin{align*}
G'_{1}(u,v)(u,0)&=2||u||^{2}_{D_{s}(\mathbb{R}^{n})}-2^{\ast}\int_{\mathbb{R}^{n}}\mu_{1}|u|^{2^{\ast}}dx-\frac{\alpha\gamma}{2^{\ast}}\int_{\mathbb{R}^{n}}\alpha|u|^{\alpha}|v|^{\beta}dx\\
                &=-(2^{\ast}-2)\int_{\mathbb{R}^{n}}\mu_{1}|u|^{2^{\ast}}dx+(2-\alpha)\int_{\mathbb{R}^{n}}\frac{\alpha\gamma}{2^{\ast}}|u|^{\alpha}|v|^{\beta}dx,\\
G'_{1}(u,v)(0,v)&=-\frac{\alpha\gamma}{2^{\ast}}\int_{\mathbb{R}^{n}}\beta|u|^{\alpha}|v|^{\beta}dx>0,\\
G'_{2}(u,v)(u,0)&=-\frac{\beta\gamma}{2^{\ast}}\int_{\mathbb{R}^{n}}\alpha|u|^{\alpha}|v|^{\beta}dx>0,\\
G'_{2}(u,v)(0,v)&=2||v||^{2}_{D_{s}(\mathbb{R}^{n})}-2^{\ast}\int_{\mathbb{R}^{n}}\mu_{2}|v|^{2^{\ast}}dx-\frac{\beta\gamma}{2^{\ast}}\int_{\mathbb{R}^{n}}\beta|u|^{\alpha}|v|^{\beta}dx\\
                &=-(2^{\ast}-2)\int_{\mathbb{R}^{n}}\mu_{2}|v|^{2^{\ast}}dx+(2-\beta)\int_{\mathbb{R}^{n}}\frac{\beta\gamma}{2^{\ast}}|u|^{\alpha}|v|^{\beta}dx.\\
\end{align*}

Suppose that $(u,v)\in \mathbb{N}$\ is a minimizer for $E$ restricted to $\mathbb{ N}$, then by the standard minimization theory, there exist two lagrange multipliers $L_{1},L_{2}\in \mathbb{R}$ such that,

$$
E'(u,v)+L_{1}G'_{1}(u,v)+L_{2}G'_{2}(u,v)=0.
$$

Then we have

$$
L_{1}G'_{1}(u,v)(u,0)+L_{2}G'_{2}(u,v)(u,0)=0,
$$

$$
L_{1}G'_{1}(u,v)(0,v)+L_{2}G'_{2}(u,v)(0,v)=0,
$$and

$$
G'_{1}(u,v)(u,0)+G'_{1}(u,v)(0,v)=-(2^{\ast}-2)||u||^{2}_{D_{s}(\mathbb{R}^{n})}\leq 0,
$$

$$
G'_{2}(u,v)(u,0)+G'_{2}(u,v)(0,v)=-(2^{\ast}-2)||v||^{2}_{D_{s}(\mathbb{R}^{n})}\leq 0.
$$

Since
$$
||u||^{2}_{D_{s}(\mathbb{R}^{n})}>0,\ ||v||^{2}_{D_{s}(\mathbb{R}^{n})}>0,
$$
hence
$$
|G'_{1}(u,v)(u,0)|=-G'_{1}(u,v)(u,0)>G'_{1}(u,v)(0,v),
$$

$$
|G'_{2}(u,v)(0,v)|=-G'_{2}(u,v)(0,v)>G'_{2}(u,v)(u,0).
$$

Define the matrix
\begin{equation*}M=       %开始数学环境
\left(                 %左括号
  \begin{array}{cc}   %该矩阵一共3列，每一列都居中放置
     G'_{1}(u,v)(u,0),& G'_{2}(u,v)(u,0)  \\  %第一行元素
    G'_{1}(u,v)(0,v), & G'_{2}(u,v)(0,v)  \\  %第二行元素
  \end{array}
\right),                 %右括号
\end{equation*}
then

$$
det(M)=|G'_{1}(u,v)(u,0)||G'_{2}(u,v)(0,v)|-G'_{1}(u,v)(0,v)G'_{2}(u,v)(u,0)>0,
$$which means\ $L_{1}=L_{2}=0$, that is $E'(u,v)=0.$\\
\end{proof}

Define functions
\begin{equation}\label{sec1}
\begin{cases}
F_{1}(k,l)= \mu_{1}k^\frac{{2^{\ast}-2}}{2}+\frac{\alpha\gamma}{2^{\ast}}k^\frac{{\alpha-2}}{2}l^\frac{{\beta}}{2}- 1, \ \ \  k>0,l\geq0,\\

F_{2}(k,l)= \mu_{2}l^\frac{{2^{\ast}-2}}{2}+\frac{\beta\gamma}{2^{\ast}}k^\frac{{\alpha}}{2}l^\frac{{\beta-2}}{2}-1, \ \ \ k\geq0,l>0,\\

l(k)=(\frac{2^{\ast}}{\alpha\gamma})^{\frac{2}{\beta}}k^{\frac{2-\alpha}{\beta}}(1-\mu_{1}k^{\frac{2^{\ast}-2}{2}})^{\frac{2}{\beta}},\ \ \ 0< k\leq\mu_{1}^{-\frac{2}{2^{\ast}-2}},\\

k(l)=(\frac{2^{\ast}}{\beta\gamma})^{\frac{2}{\alpha}}l^{\frac{2-\beta}{\alpha}}(1-\mu_{2}l^{\frac{2^{\ast}-2}{2}})^{\frac{2}{\alpha}},\ \ \ 0< l\leq\mu_{2}^{-\frac{2}{2^{\ast}-2}},
\end{cases}
\end{equation}
then

$$
F_{1}(k,l(k))\equiv0,\  F_{2}(k(l),l)\equiv0.
$$

\begin{remark}
If\ $(k,l)$\ satisfies \eqref{int6},\ then\ $(\sqrt{k}U_{\varepsilon, y},\sqrt{l}U_{\varepsilon, y})$\  is a nontrivial solution of\eqref{int1}, where $U_{\varepsilon, y}$\  satisfy \eqref{int4} and \eqref{int5}. Hence the
main work is to establish the existence of solutions to \eqref{int6}.
\end{remark}

In order to prove the existence results for  \eqref{int6}, we have the following Lemma \ref{lem3}.

\begin{lemma}\label{lem3}
Assume that $n>4s ,1<\alpha,\beta<2,\gamma>0$, then
\begin{equation}\label{sec2}
F_{1}(k,l)=0,\ F_{2}(k,l)=0,\quad k,l>0
\end{equation}
has a solution $(k_{0},l_{0})$ such that

\begin{equation}\label{sec3}
 F_{2}(k,l(k))<0,\quad \forall \hspace{0.1cm}  k\in(0,k_{0}),
 \end{equation}
where\ $(k_{0},l_{0})$\ satisfies $k_{0}=min\{k:(k,l) \ satisfies \ \eqref{int6}\}$. Similarly, \eqref{sec2} has a solution $(k_{1},l_{1})$, such that

\begin{equation}\label{sec4}
F_{1}(k(l),l)<0,\quad \forall \hspace{0.1cm} l\in(0,l_{1}).
\end{equation}
\end{lemma}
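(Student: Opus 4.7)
\textbf{Plan for the proof of Lemma \ref{lem3}.} The plan is to reduce the two-variable system $F_{1}=F_{2}=0$ to finding zeros of a single continuous function of one variable, and then apply the intermediate value theorem together with a boundary-sign analysis. For each fixed $k\in(0,\mu_{1}^{-2/(2^{\ast}-2)})$, the map $l\mapsto F_{1}(k,l)$ is strictly increasing on $(0,\infty)$ since $\gamma>0$, so the unique $l>0$ solving $F_{1}(k,l)=0$ is precisely $l(k)$ from \eqref{sec1}. Consequently the solutions $(k,l)$ of \eqref{int6} with $k,l>0$ correspond bijectively, via $l=l(k)$, to zeros of
\[
g(k):=F_{2}(k,l(k))
\]
on $(0,\mu_{1}^{-2/(2^{\ast}-2)})$, and in particular $k_{0}=\min\{k:(k,l)\ \text{satisfies \eqref{int6}}\}$ is the smallest zero of $g$.

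Next I would compute the one-sided limits of $g$. As $k\to 0^{+}$, the formula for $l(k)$ gives $l(k)\sim c\,k^{(2-\alpha)/\beta}$ with $(2-\alpha)/\beta>0$, so $l(k)\to 0$. The term $\mu_{2}l(k)^{(2^{\ast}-2)/2}$ then tends to $0$, while the cross term $k^{\alpha/2}l(k)^{(\beta-2)/2}$ has leading exponent
\[
\frac{\alpha}{2}+\frac{(2-\alpha)(\beta-2)}{2\beta}=\frac{\alpha+\beta-2}{\beta}=\frac{2^{\ast}-2}{\beta}>0,
\]
using $\alpha+\beta=2^{\ast}$, so this term also tends to $0$; hence $g(0^{+})=-1<0$. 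As $k\to(\mu_{1}^{-2/(2^{\ast}-2)})^{-}$ we have $l(k)\to 0^{+}$ because $1-\mu_{1}k^{(2^{\ast}-2)/2}\to 0$, but $k^{\alpha/2}$ stays bounded away from $0$ and $l(k)^{(\beta-2)/2}\to+\infty$ since $\beta-2<0$, so $g(k)\to+\infty$.

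By continuity and the change of sign, the zero set $Z\subset(0,\mu_{1}^{-2/(2^{\ast}-2)})$ of $g$ is nonempty and closed. Since $g(0^{+})=-1<0$ there is a neighborhood of $0$ on which $g<0$, so $k_{0}:=\inf Z>0$ and the infimum is attained. Setting $l_{0}:=l(k_{0})$ gives a solution of \eqref{int6} with $k_{0}$ minimal. By minimality $g(k)\neq 0$ on $(0,k_{0})$; combined with $g(0^{+})<0$ and continuity this forces $g<0$ throughout $(0,k_{0})$, which is exactly \eqref{sec3}. The symmetric statement \eqref{sec4} follows by repeating the whole argument with the roles of $k$ and $l$ exchanged, applied to $h(l):=F_{1}(k(l),l)$ on $(0,\mu_{2}^{-2/(2^{\ast}-2)})$.

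The most delicate step is verifying $g(0^{+})=-1$, which hinges on showing that the cross term $k^{\alpha/2}l(k)^{(\beta-2)/2}$ vanishes in the limit. This rests on the exponent computation above, whose crucial input is $\alpha+\beta=2^{\ast}$; the hypothesis $n>4s$ enters only implicitly, by forcing $2^{\ast}<4$ so that $1<\alpha,\beta<2$ is compatible with $\alpha+\beta=2^{\ast}$.
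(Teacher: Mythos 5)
Your argument is correct and is essentially the paper's proof: solve $F_{1}=0$ for $l=l(k)$, substitute into $F_{2}$, check the signs at the two ends of $(0,\mu_{1}^{-2/(2^{\ast}-2)})$, apply the intermediate value theorem, and take the smallest zero to get \eqref{sec3}, then argue symmetrically for \eqref{sec4}. The only (cosmetic) difference is that you work with $g(k)=F_{2}(k,l(k))$ directly, showing $g(0^{+})=-1$ and $g\to+\infty$ at the right endpoint, whereas the paper first clears powers of $k$ and $(1-\mu_{1}k^{(2^{\ast}-2)/2})$ to form a rescaled $f(k)$ with $f\to-\infty$ at $0$; your version is slightly cleaner and your exponent computation using $\alpha+\beta=2^{\ast}$ is the same key point.
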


\begin{proof}
Solving $F_{1}(k,l)=0$ for $k,l>0,$\ we have \ $l=l(k)$ for all $ k\in (0,\mu_{1}^{-\frac{2}{2^{\ast}-2}}).$
Then, substituting this into $F_{2}(k,l)=0,$ we have

\begin{equation}\label{sec5}
\begin{array}{c}
\mu_{2}(\frac{2^{\ast}}{\alpha\gamma})^{\frac{\alpha}{\beta}}(1-\mu_{1}k^{\frac{2^{\ast}-2}{2}})^{\frac{\alpha}{\beta}}+\frac{\beta\gamma}{2^{\ast}}k^{-\frac{(2^{\ast}-2)\alpha}{2\beta}}\\
\\
-(\frac{2^{\ast}}{\alpha\gamma})^{\frac{2-\alpha}{\beta}}k^{-\frac{(2^{\ast}-2)(2-\beta)}{2\beta}}(1-\mu_{1}k^{\frac{2^{\ast}-2}{2}})^{\frac{2-\beta}{\beta}}=0.
\end{array}
\end{equation}

Let

\begin{align*}
f(k)=&\mu_{2}(\frac{2^{\ast}}{\alpha\gamma})^{\frac{\alpha}{\beta}}(1-\mu_{1}k^{\frac{2^{\ast}-2}{2}})^{\frac{\alpha}{\beta}}+\frac{\beta\gamma}{2^{\ast}}k^{-\frac{(2^{\ast}-2)\alpha}{2\beta}}\\
                                &-(\frac{2^{\ast}}{\alpha\gamma})^{\frac{2-\alpha}{\beta}}k^{-\frac{(2^{\ast}-2)(2-\beta)}{2\beta}}(1-\mu_{1}k^{\frac{2^{\ast}-2}{2}})^{\frac{2-\beta}{\beta}},
\end{align*}
then \eqref{sec5} has a solution is equivalent to $ f(k)=0$ \ has a solution in\ $(0,\mu_{1}^{-\frac{2}{2^{\ast}-2}})$. Since $1<\alpha,\beta<2$,\ we obtain

$$
\lim \limits _{k\rightarrow 0^{+}}f(k)=-\infty,\ f(\mu_{1}^{-\frac{2}{2^{\ast}-2}})=\frac{\beta\gamma}{2^{\ast}}\mu_{1}^{-\frac{\alpha}{\beta}}>0,
$$
then by the intermediate value theorem, there exists

$$
k_{0}\in(0,\mu_{1}^{-\frac{2}{2^{\ast}-2}})\ \ s.\  t. , f(k_{0})=0
$$ and

$$
f(k)<0, \ \forall \ k\in(0,k_{0}).
$$

Let\ $l_{0}=l(k_{0}),$\ then\ $(k_{0},l_{0})$\ is the solution of \eqref{sec2} and satisfies \eqref{sec3}.
Similarly, we can show \eqref{sec2} has a solution\ $(k_{1},l_{1})$,\ such that

\begin{equation*}
F_{1}(k(l),l)<0, \ \forall \ l\in(0,l_{1}).
\end{equation*}

\end{proof}

\begin{remark}\label{rem5}
From the proof of Lemma \ref{prop1} in the next a few pages, it is easy to see that system \eqref{int6} has only one solution $(k,l)=(k_{0},l_{0})$ under the assumption that $2s<n<4s,\ \alpha>2,\ \beta>2$\ and \eqref{int7}.
\end{remark}

\begin{remark}\label{rem2}
Obviously, if\ $(\sqrt{k_{0}}U_{\varepsilon, y},\sqrt{l_{0}}U_{\varepsilon, y})\in \mathbb{N}$, then

$$
A\leq E(\sqrt{k_{0}}U_{\varepsilon, y},\sqrt{l_{0}}U_{\varepsilon, y})=\frac{s}{n}(k_{0}+l_{0})S_{s}^{\frac{n}{2s}}.
$$
\end{remark}

Next, in order to show $A\geq \frac{s}{n}(k_{0}+l_{0})S_{s}^{\frac{n}{2s}}$, we require the following lemmas.\\

{\bf Case 1}. $n>4s,\ 1<\alpha,\beta<2$ and  \eqref{int8} hold.

\begin{lemma}\label{lem4}
Assume $n>4s ,\ 1<\alpha,\beta<2$\ and  \eqref{int8} hold, then

$$
l(k)+k \ is\  strictly\  increasing \  in\ [0,\mu_{1}^{-\frac{2}{2^{\ast}-2}}],
$$

$$
k(l)+l \ is \  strictly \ increasing \  in\  [0,\mu_{2}^{-\frac{2}{2^{\ast}-2}}].
$$
\end{lemma}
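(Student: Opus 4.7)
\textbf{Proof proposal for Lemma \ref{lem4}.}

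The plan is to compute $l'(k)$ explicitly, reduce the problem to showing $-l'(k)<1$ in the region where $l'(k)<0$, find the unique critical point of the resulting one-variable function, and finally check that the assumption \eqref{int8} is exactly what is needed to make the maximum at most $1$. The argument for $k(l)+l$ will be symmetric.

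First I would do a logarithmic differentiation of the formula
$l(k)=(\tfrac{2^{\ast}}{\alpha\gamma})^{2/\beta}k^{(2-\alpha)/\beta}(1-\mu_1 k^{(2^{\ast}-2)/2})^{2/\beta}$
and, introducing the change of variable $r=\mu_1 k^{(2^{\ast}-2)/2}\in(0,1)$, rewrite
$$
l'(k)=\frac{l(k)}{\beta k}\Bigl[(2-\alpha)-(2^{\ast}-2)\,\frac{r}{1-r}\Bigr].
$$
Using $\alpha+\beta=2^{\ast}$ and the identity $l(k)/k=(2^{\ast}\mu_1/(\alpha\gamma))^{2/\beta}((1-r)/r)^{2/\beta}$, I would reduce matters to showing that
$$
h(r):=\Bigl(\frac{2^{\ast}\mu_1}{\alpha\gamma}\Bigr)^{2/\beta}\frac{(1-r)^{(2-\beta)/\beta}}{r^{2/\beta}}\bigl(r-\tau\bigr)\le 1,\qquad \tau:=\frac{2-\alpha}{\beta},
$$
for all $r\in(\tau,1)$; for $r\in(0,\tau]$ we have $l'(k)\ge 0$ and nothing is to prove.

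Next I would find the maximizer of $h$ on $(\tau,1)$. Setting $h'(r)=0$ and simplifying (using $4-2^{\ast}=(2-\alpha)+(2-\beta)>0$, which is where the hypothesis $n>4s$ enters via $2^{\ast}<4$) gives the unique critical point
$$
r^{\ast}=\frac{2(2-\alpha)}{\beta\bigl[(2-\alpha)+(2-\beta)\bigr]},
$$
and one checks $\tau<r^{\ast}<1$. Substituting back, and using the identities
$1-r^{\ast}=\tfrac{(2-\beta)(2^{\ast}-2)}{\beta(4-2^{\ast})}$ and $r^{\ast}-\tau=\tfrac{(2-\alpha)(2^{\ast}-2)}{\beta(4-2^{\ast})}$, I would simplify $h(r^{\ast})$ to
$$
h(r^{\ast})=\frac{2^{\ast}\mu_1}{\alpha\gamma}\cdot\frac{2^{\ast}-2}{2}\Bigl(\frac{2-\beta}{2-\alpha}\Bigr)^{(2-\beta)/2}\cdot\ \text{(a tidy power)},
$$
where the clean form uses $\frac{4ns}{(n-2s)^2}=\frac{2^{\ast}(2^{\ast}-2)}{2}$. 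Since \eqref{int8} gives $\gamma\ge \frac{2^{\ast}(2^{\ast}-2)}{2}\,\frac{\mu_1}{\alpha}\bigl(\tfrac{2-\beta}{2-\alpha}\bigr)^{(2-\beta)/2}$, one reads off $h(r^{\ast})\le 1$, which yields $l'(k)+1\ge 0$ throughout; strict inequality $l'(k)+1>0$ then follows from strict positivity of $l(k)$ in the open interval and the strict parts of these inequalities (the endpoints are easy: $l'(0^+)=+\infty$ gives no issue, and near the right endpoint $l'(k)\to 0^-$). The strict monotonicity of $l(k)+k$ follows.

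The same argument, with the roles of $(\mu_1,\alpha)$ and $(\mu_2,\beta)$ interchanged, handles $k(l)+l$, using the $\mu_2$-term in the maximum of \eqref{int8}. The main obstacle I expect is not any conceptual point but the bookkeeping in the simplification of $h(r^{\ast})$: several factors of $\beta(4-2^{\ast})$ must cancel cleanly, and it is important to notice that $\frac{4ns}{(n-2s)^2}$ is nothing but $\frac{2^{\ast}(2^{\ast}-2)}{2}$, because without this identification the hypothesis \eqref{int8} does not visibly match the threshold value $h(r^{\ast})=1$.
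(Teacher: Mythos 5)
Your proposal is correct and is essentially the paper's own argument: you reduce the claim to showing that the minimum of $l'$ is at least $-1$, locate the same critical point (your $r^{\ast}$ corresponds to the paper's $k_{1}=\bigl(\tfrac{2(2-\alpha)}{\mu_{1}\beta(4-2^{\ast})}\bigr)^{2/(2^{\ast}-2)}$ via $r=\mu_{1}k^{(2^{\ast}-2)/2}$), obtain the same extremal value $\bigl(\tfrac{2^{\ast}(2^{\ast}-2)\mu_{1}}{2\alpha\gamma}\bigr)^{2/\beta}\bigl(\tfrac{2-\beta}{2-\alpha}\bigr)^{(2-\beta)/\beta}$, and match it to \eqref{int8} through the identity $\tfrac{4ns}{(n-2s)^{2}}=\tfrac{2^{\ast}(2^{\ast}-2)}{2}$, the only difference being cosmetic (the substitution and maximizing $h=-l'$ by a first-derivative computation instead of the paper's $l''$ computation). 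One small imprecision, shared with the paper: when equality holds in \eqref{int8} one gets $l'(k_{1})+1=0$ rather than $>0$, but strict monotonicity of $l(k)+k$ still follows since the derivative of $l(k)+k$ is nonnegative and vanishes at a single point only.
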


\begin{proof}
Since

\begin{align*}
l'(k)&=(\frac{2^{\ast}}{\alpha\gamma})^{\frac{2}{\beta}}\frac{2}{\beta}(k^{\frac{2-\alpha}{2}}-\mu_{1}k^{\frac{\beta}{2}})^{\frac{2-\beta}{\beta}}(\frac{2-\alpha}{2}k^{-\frac{\alpha}{2}}-\frac{\mu_{1}\beta}{2}k^{\frac{\beta-2}{2}})\\
     &=(\frac{2^{\ast}\mu_{1}}{\alpha\gamma})^{\frac{2}{\beta}}k^{\frac{2-2^{\ast}}{\beta}}(\mu_{1}^{-1}-k^{\frac{2^{\ast}-2}{2}})^{\frac{2-\beta}{\beta}}(\frac{2-\alpha}{\mu_{1}\beta}-k^{\frac{2^{\ast}-2}{2}}),\\
k'(l)&=(\frac{2^{\ast}}{\beta\gamma})^{\frac{2}{\alpha}}\frac{2}{\alpha}(l^{\frac{2-\beta}{2}}-\mu_{2}l^{\frac{\alpha}{2}})^{\frac{2-\alpha}{\alpha}}(\frac{2-\beta}{2}l^{-\frac{\beta}{2}}-\frac{\mu_{2}\alpha}{2}l^{\frac{\alpha-2}{2}})\\
     &=(\frac{2^{\ast}\mu_{2}}{\beta\gamma})^{\frac{2}{\alpha}}l^{\frac{2-2^{\ast}}{\alpha}}(\mu_{2}^{-1}-l^{\frac{2^{\ast}-2}{2}})^{\frac{2-\alpha}{\alpha}}(\frac{2-\beta}{\mu_{2}\alpha}-l^{\frac{2^{\ast}-2}{2}}),\\
\end{align*}
hence

$$
l'((\frac{2-\alpha}{\mu_{1}\beta})^{\frac{2}{2^{\ast}-2}})=l'(\mu_{1}^{-\frac{2}{2^{\ast}-2}})=0,
$$

$$
k'((\frac{2-\beta}{\mu_{2}\alpha})^{\frac{2}{2^{\ast}-2}})=k'(\mu_{2}^{-\frac{2}{2^{\ast}-2}})=0,
$$ and

\begin{align*}
 l'(k)>0&\Leftrightarrow k\in(0,(\frac{2-\alpha}{\mu_{1}\beta})^{\frac{2}{2^{\ast}-2}}),\\
 l'(k)<0&\Leftrightarrow k\in((\frac{2-\alpha}{\mu_{1}\beta})^{\frac{2}{2^{\ast}-2}},\mu_{1}^{-\frac{2}{2^{\ast}-2}}),\\
 k'(l)>0&\Leftrightarrow l\in(0,(\frac{2-\beta}{\mu_{2}\alpha})^{\frac{2}{2^{\ast}-2}}),\\
 k'(l)<0&\Leftrightarrow l\in((\frac{2-\beta}{\mu_{2}\alpha})^{\frac{2}{2^{\ast}-2}},\mu_{2}^{-\frac{2}{2^{\ast}-2}}).\\
\end{align*}

Next, we compute the second derivatives,

\begin{align*}
l''(k)&=\frac{2-\beta}{\beta}(\frac{2^{\ast}\mu_{1}}{\alpha\gamma})^{\frac{2}{\beta}}k^{\frac{2-2\beta-\alpha}{\beta}}(\mu_{1}^{-1}-k^{\frac{2^{\ast}-2}{2}})^{\frac{2-2\beta}{\beta}}\\
      &\times[(\frac{2-\alpha}{\mu_{1}\beta}-k^{\frac{2^{\ast}-2}{2}})^{2}-(\mu_{1}^{-1}-k^{\frac{2^{\ast}-2}{2}})(\frac{\alpha(2-\alpha)}{\mu_{1}\beta(2-\beta)}-k^{\frac{2^{\ast}-2}{2}})],\\
k''(l)&=\frac{2-\alpha}{\alpha}(\frac{2^{\ast}\mu_{2}}{\beta\gamma})^{\frac{2}{\alpha}}l^{\frac{2-2\alpha-\beta}{\alpha}}(\mu_{2}^{-1}-l^{\frac{2^{\ast}-2}{2}})^{\frac{2-2\alpha}{\alpha}}\\
      &\times[(\frac{2-\beta}{\mu_{2}\alpha}-l^{\frac{2^{\ast}-2}{2}})^{2}-(\mu_{2}^{-1}-l^{\frac{2^{\ast}-2}{2}})(\frac{\beta(2-\beta)}{\mu_{2}\alpha(2-\alpha)}-l^{\frac{2^{\ast}-2}{2}})],\\
\end{align*}we have

$$
l''(k)=0\Leftrightarrow k=(\frac{2(2-\alpha)}{\mu_{1}\beta(4-2^{\ast})})^{\frac{2}{2^{\ast}-2}}\triangleq k_{1}\in((\frac{2-\alpha}{\mu_{1}\beta})^{\frac{2}{2^{\ast}-2}},\mu_{1}^{-\frac{2}{2^{\ast}-2}}),
$$

$$
k''(l)=0\Leftrightarrow l=(\frac{2(2-\beta)}{\mu_{2}\alpha(4-2^{\ast})})^{\frac{2}{2^{\ast}-2}}\triangleq l_{1}\in((\frac{2-\beta}{\mu_{2}\alpha})^{\frac{2}{2^{\ast}-2}},\mu_{2}^{-\frac{2}{2^{\ast}-2}}).
$$

By  \eqref{int8}, we obtain
$$
l'(k)_{min}=l'(k_{1})=-(\frac{2^{\ast}(2^{\ast}-2)\mu_{1}}{2\alpha\gamma})^{\frac{2}{\beta}}(\frac{2-\beta}{2-\alpha})^{\frac{2-\beta}{\beta}}\geq -1,
$$

$$
k'(l)_{min}=k'(l_{1})=-(\frac{2^{\ast}(2^{\ast}-2)\mu_{2}}{2\beta\gamma})^{\frac{2}{\alpha}}(\frac{2-\alpha}{2-\beta})^{\frac{2-\alpha}{\alpha}}\geq -1.
$$

Hence
$$
l'(k)>-1,\ \forall \ k\in(0,\mu_{1}^{-\frac{2}{2^{\ast}-2}}),
$$

$$
k'(l)>-1,\ \forall\  l\in(0,\mu_{2}^{-\frac{2}{2^{\ast}-2}}),
$$
which means\  $l(k)+k$ \ is strictly increasing in\ $[0,\mu_{1}^{-\frac{2}{2^{\ast}-2}}] $\ and $k(l)+l$\ is strictly increasing in\ $[0,\mu_{1}^{-\frac{2}{2^{\ast}-2}}].$
\end{proof}

\begin{lemma}\label{lem5}
Assume $n>4s ,\ 1<\alpha,\beta<2$\ and  \eqref{int8} hold, \ $(k_{0},l_{0})$ is obtained in Lemma \ref{lem3}.\ Then

\begin{equation}\label{sec6}
(k_{0}+l_{0})^{\frac{2^{\ast}-2}{2}}max\{\mu_{1},\mu_{2}\}<1,
\end{equation}and

\begin{equation}\label{sec7}
F_{2}(k,l(k))<0,\ \  \forall\ k\in(0,k_{0});\ F_{1}(k(l),l)<0,\ \forall\ \ l\ \in(0,l_{0}).
\end{equation}

\end{lemma}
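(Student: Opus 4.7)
The plan is to combine the two strict monotonicities furnished by Lemma \ref{lem4} with the minimality characterization of $(k_{0},l_{0})$ from Lemma \ref{lem3}. For \eqref{sec6}, I would first note that since $\gamma>0$ and the coupling terms in \eqref{int6} are strictly positive, the identities $F_{1}(k_{0},l_{0})=F_{2}(k_{0},l_{0})=0$ immediately give the strict bounds $k_{0}<\mu_{1}^{-\frac{2}{2^{\ast}-2}}$ and $l_{0}<\mu_{2}^{-\frac{2}{2^{\ast}-2}}$. Using $l_{0}=l(k_{0})$ together with the boundary value $l(\mu_{1}^{-\frac{2}{2^{\ast}-2}})=0$, the strict monotonicity of $k+l(k)$ on $[0,\mu_{1}^{-\frac{2}{2^{\ast}-2}}]$ yields $k_{0}+l_{0}=k_{0}+l(k_{0})<\mu_{1}^{-\frac{2}{2^{\ast}-2}}$. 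The symmetric argument via $k_{0}=k(l_{0})$ and the monotonicity of $k(l)+l$ gives $k_{0}+l_{0}<\mu_{2}^{-\frac{2}{2^{\ast}-2}}$. Raising these to the power $(2^{\ast}-2)/2$ and taking the maximum of $\mu_{1}$ and $\mu_{2}$ is exactly \eqref{sec6}.

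For \eqref{sec7}, the first inequality $F_{2}(k,l(k))<0$ on $(0,k_{0})$ is already the content of Lemma \ref{lem3}, so nothing further is needed. For the second inequality, the plan is to prove that $l_{0}=l_{1}$, where $(k_{1},l_{1})$ is the solution from Lemma \ref{lem3} with minimal $l$-coordinate; once this is known, the conclusion $F_{1}(k(l),l)<0$ on $(0,l_{0})=(0,l_{1})$ follows immediately from Lemma \ref{lem3}.

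The equality $l_{0}=l_{1}$ will follow from a pairwise ordering assertion: any two distinct solutions $(k,l)$ and $(k',l')$ of \eqref{int6} with $k<k'$ must satisfy $l<l'$. Indeed, both solutions simultaneously lie on $l=l(k)$ and on $k=k(l)$, so the strict monotonicity of $k+l(k)$ yields $k+l<k'+l'$, whereas the strict monotonicity of $k(l)+l$ would yield $k+l\geq k'+l'$ whenever $l\geq l'$, a contradiction (the boundary case $l=l'$ would force $k=k(l)=k(l')=k'$, again contradicting $k<k'$). Applying this ordering to the pair $(k_{0},l_{0})$ and $(k_{1},l_{1})$: minimality of $k_{0}$ gives $k_{1}\geq k_{0}$, with equality forcing $l_{1}=l(k_{1})=l(k_{0})=l_{0}$, while $k_{1}>k_{0}$ would force $l_{1}>l_{0}$, contradicting the minimality of $l_{1}$. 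Hence $l_{0}=l_{1}$. The main obstacle is formulating this pairwise ordering cleanly; everything else is a direct consequence of Lemmas \ref{lem3} and \ref{lem4}.
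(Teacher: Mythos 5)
Your proposal is correct and follows essentially the same route as the paper: \eqref{sec6} comes from the strict monotonicity of $k+l(k)$ and $k(l)+l$ (Lemma \ref{lem4}) applied at the endpoints $\mu_{1}^{-\frac{2}{2^{\ast}-2}}$, $\mu_{2}^{-\frac{2}{2^{\ast}-2}}$, and \eqref{sec7} reduces to identifying $(k_{0},l_{0})$ with $(k_{1},l_{1})$ via a contradiction that plays the two monotonicities against each other, exactly as in the paper (your ``pairwise ordering'' is just a repackaging of that contradiction); your handling of the strict inequality in \eqref{sec6} is, if anything, slightly cleaner than the paper's.
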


\begin{proof}

Since $k_{0}<\mu_{1}^{-\frac{2}{2^{\ast}-2}}.$ By Lemma \ref{lem4}, we have
$$
\mu_{1}^{-\frac{2}{2^{\ast}-2}}=l(\mu_{1}^{-\frac{2}{2^{\ast}-2}})+\mu_{1}^{-\frac{2}{2\ast-2}}\geq l(k_{0})+k_{0}=l_{0}+k_{0}.
$$

That is
$$
\mu_{1}(k_{0}+l_{0})^{\frac{2^{\ast}-2}{2}}\leq1.
$$

Similarly
$$
\mu_{2}(k_{0}+l_{0})^{\frac{2^{\ast}-2}{2}}\leq1.
$$

Hence
$$
(k_{0}+l_{0})^{\frac{2^{\ast}-2}{2}}max\{\mu_{1},\mu_{2}\}<1.
$$

To prove \eqref{sec7}, by Lemma \ref{lem3}, we only need to show that $(k_{0},l_{0})=(k_{1},l_{1}).$
By \eqref{sec3}, \eqref{sec4}, we have $l_{0}\geq l_{1},\ k_{1}\geq k_{0}$. Suppose by contradiction that $k_{1}> k_{0}，$,
then
$$
l(k_{1})+k_{1}>l(k_{0})+k_{0},
$$hence

$$
l_{1}+k(l_{1})=l(k_{1})+k_{1}>l(k_{0})+k_{0}=l_{0}+k(l_{0}).
$$

Since\ $k(l)+l$\ is strictly increasing for $l\in[0,\mu_{2}^{-\frac{2}{2^{\ast}-2}}],$
therefore $l_{1}>l_{0},$ which contradicts to $l_{0}\geq l$, then we have $k_{1}=k_{0}.$ Similarly, $l_{1}=l_{0}.$
\end{proof}

\begin{lemma}\label{prop2}
Assume\ $n>4s,1<\alpha,\beta<2$ \ and \eqref{int8} hold, then

\begin{equation}
\begin{cases}
k+l\leq k_{0}+l_{0},\\

F_{1}(k,l)\geq 0,\ F_{2}(k,l)\geq 0,\\

k,l>0,\ (k,l)\neq(0,0),
\end{cases}
\end{equation}
has an unique solution $(k,l)=(k_{0},l_{0}).$
\end{lemma}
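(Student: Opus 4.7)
The plan is to reformulate the lemma as a constrained minimization: I will show that $\min\{k+l:(k,l)\in\mathcal{F}\cap\{k+l\le k_0+l_0\}\}=k_0+l_0$ and is attained only at $(k_0,l_0)$, where $\mathcal{F}:=\{(k,l):k,l>0,\,F_1(k,l)\ge 0,\,F_2(k,l)\ge 0\}$. Since $(k_0,l_0)\in\mathcal{F}$ realizes this value, uniqueness of the minimizer gives the lemma.

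\textbf{Step 1 (a priori bounds).} By Lemma \ref{lem5}, $k_0+l_0<\min\{\mu_1^{-2/(2^{\ast}-2)},\mu_2^{-2/(2^{\ast}-2)}\}$, so any admissible $(k,l)$ satisfies $k<\mu_1^{-2/(2^{\ast}-2)}$ and $l<\mu_2^{-2/(2^{\ast}-2)}$, placing us in the regime where $l(k)$ and $k(l)$ from \eqref{sec1} are well-defined. Since $\partial F_1/\partial l>0$ and $\partial F_2/\partial k>0$ on the positive quadrant, the constraints $F_i\ge 0$ translate into $l\ge l(k)$ and $k\ge k(l)$. Combining the strict monotonicity of $k+l(k)$ from Lemma \ref{lem4} with $l\ge l(k)$ and $k+l\le k_0+l(k_0)$, I obtain $k\le k_0$; symmetrically $l\le l_0$.

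\textbf{Step 2 (boundary reduction and conclusion).} If $F_1(k,l)>0$ and $F_2(k,l)>0$ hold simultaneously at a minimizer, continuity allows me to decrease $k$ by a small $\epsilon>0$ while preserving $F_1,F_2>0$, producing a feasible point with strictly smaller sum, contradicting minimality. Hence any minimizer lies on $\{F_1=0\}\cup\{F_2=0\}$. On $\{F_1=0\}\cap\mathcal{F}$ one has $l=l(k)$, and the constraint $F_2(k,l(k))\ge 0$ together with inequality \eqref{sec7} from Lemma \ref{lem5} forces $k\ge k_0$; Lemma \ref{lem4} then yields $k+l(k)\ge k_0+l(k_0)=k_0+l_0$, with equality iff $k=k_0$ (so $l=l_0$). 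The analysis on $\{F_2=0\}\cap\mathcal{F}$ is symmetric, so $(k_0,l_0)$ is the unique minimizer.

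\textbf{Main obstacle.} The delicate point is establishing existence of the minimizer in Step 2. Closedness and boundedness of $\mathcal{F}\cap\{k+l\le k_0+l_0\}$ are immediate from $k,l\le k_0+l_0$, but I must rule out leakage to the coordinate axes: a minimizing sequence with $k_n\to 0$ and $l_n\to l^{\ast}>0$ would pass to $F_2(0,l^{\ast})\ge 0$, forcing $l^{\ast}\ge\mu_2^{-2/(2^{\ast}-2)}>k_0+l_0$ by Lemma \ref{lem5}, a contradiction; the joint collapse $k_n,l_n\to 0$ is excluded by multiplying $F_1\ge 0$ and $F_2\ge 0$, which produces a lower bound $\gamma^2 k_n^{\alpha-1}l_n^{\beta-1}\gtrsim 1$ that fails because $\alpha,\beta>1$. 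Once this confinement is in place, the sublevel set is compact and continuity of $(k,l)\mapsto k+l$ delivers the minimizer.
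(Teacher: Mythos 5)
Your proof is correct, but it follows a genuinely different route from the paper's. The paper argues by direct contradiction on a putative second solution $(\widetilde k,\widetilde l)$: using the unimodal shape of $k(\cdot)$ established inside the proof of Lemma \ref{lem4} (it vanishes at $0$ and at $\mu_{2}^{-\frac{2}{2^{\ast}-2}}$, increasing then decreasing), the level set $\{l:k(l)=\widetilde k\}$ consists of two points $l_{1}<l_{2}$, and the paper plays the equivalence $F_{2}(\widetilde k,l)<0\Leftrightarrow l_{1}<l<l_{2}$ against the auxiliary point $l_{3}=k_{0}+l_{0}-\widetilde k$ to force both $\widetilde l\geq l_{2}$ and $\widetilde l<l_{2}$. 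You instead set up a constrained minimization of $k+l$. The price is an existence/compactness step, which you handle correctly: collapse of one variable is excluded by passing to the limit in the other constraint and invoking \eqref{sec6} (so that $k_{0}+l_{0}<\min\{\mu_{1}^{-\frac{2}{2^{\ast}-2}},\mu_{2}^{-\frac{2}{2^{\ast}-2}}\}$), and joint collapse is excluded by multiplying the two constraints, which bounds $k^{\alpha-1}l^{\beta-1}$ from below while $\alpha,\beta>1$ sends it to zero. The payoff is a cleaner endgame: after the perturbation argument reduces any minimizer to $\{F_{1}=0\}\cup\{F_{2}=0\}$, only the statements of Lemma \ref{lem4} (strict monotonicity of $k+l(k)$ and $l+k(l)$) and \eqref{sec7} are needed, not the finer monotonicity analysis of $k(l)$; so both proofs rest on the same inputs, Lemmas \ref{lem4} and \ref{lem5}, but yours replaces the ad hoc level-set construction by a standard minimizer argument. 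If you write this up in full, spell out the symmetric axis case $l_{n}\to 0$, $k_{n}\to k^{\ast}>0$ (via $F_{1}(k^{\ast},0)\geq 0$ and \eqref{sec6}), and the identity $k(l_{0})=k_{0}$ (immediate since $F_{2}$ is strictly increasing in $k$), which you use implicitly in the $\{F_{2}=0\}$ case.
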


\begin{proof}

Obviously $(k_{0},l_{0})$\ satisfies \eqref{prop2}. Suppose\ $(\widetilde{k},\ \widetilde{l})$ is another solution of\ \eqref{prop2}.
Without loss of generality, we may assume that\ $\widetilde{k}>0,$\ then $\widetilde{l}>0$. In fact, if\ $\widetilde{l}=0,$ then\ $ \widetilde{k}\leq k_{0}+l_{0}$
and
$$
F_{1}(\widetilde{k},0)=\mu_{1}\widetilde{k}^{\frac{2^{\ast}-2}{2}}-1\geq 0.
$$

Therefore

$$
1\leq\mu_{1}\widetilde{k}^{\frac{2^{\ast}-2}{2}}\leq\mu_{1}(k_{0}+l_{0})^{\frac{2^{\ast}-2}{2}},
$$
which contradicts to Lemma \ref{lem5}.\ In the following, we prove that $\widetilde{k}=k_{0}.$
Suppose by contradiction that $\widetilde{k}<k_{0},$ by the proof of Lemma \ref{lem4},\ we have $k(l)$\ is strictly increasing on\ $(0,(\frac{2-\beta}{\mu_{2}\alpha})^{\frac{2}{2^{\ast}-2}})$,\ and strictly decreasing on
$((\frac{2-\beta}{\mu_{2}\alpha})^{\frac{2}{2^{\ast}-2}},\mu_{2}^{-\frac{2}{2^{\ast}-2}})$.

On the one hand, since $k(0)=k(\mu_{2}^{-\frac{2}{2^{\ast}-2}})=0$ and
 $0<\widetilde{k}<k_{0},$ therefore, there exist $l_{1},\ l_{2 }$ satisfying $0<l_{1}<l_{2}<\mu_{2}^{-\frac{2}{2^{\ast}-2}},$ such that $k(l_{1})=k(l_{2})=\widetilde{k}$ and

\begin{equation}\label{prop3}
F_{2}({\widetilde{k}},l)<0\Leftrightarrow\widetilde{k}<k(l)\Leftrightarrow l_{1}<l<l_{2}.
\end{equation}

Furthermore, $F_{1}({\widetilde{k}},\ \widetilde{l})\geq 0,\ F_{2}({\widetilde{k}},\widetilde{l})\geq 0,$ we have $ \widetilde{l}>l(\widetilde{k}),\ \widetilde{l}\leq l_{1}\ or\ \widetilde{l}\geq l_{2}.$\\
By \eqref{sec7},\ we see， $F_{2}({\widetilde{k}},l(\widetilde{k}))<0 ,\ $ by \eqref{prop3},  we obtain \ $l_{1}<l<l_{2}$,
therefore $\widetilde{l}\geq l_{2}.$

On the other hand, \ let\ $l_{3}=k_{0}+l_{0}-\widetilde{k},$\ then\ $l_{3}>l_{0}$ and $$k(l_{3})+k_{0}+l_{0}-\widetilde{k}=k(l_{3})+l_{3}>k(l_{0})+l_{0}=k_{0}+l_{0},$$\ that is\ $k(l_{3})>\widetilde{k}.$ By \eqref{prop3},
we have $l_{1}<l_{3}<l_{2}.$\ Since $ \widetilde{k}+\widetilde{l} \leq k_{0}+l_{0},$
we obtain that $\widetilde{l}\leq k_{0}+l_{0}-\widetilde{k}=l_{3}<l_{2}.$
This contradicts to $\widetilde{l}\geq l_{2}, $ the proof completes.\\
\end{proof}

{\bf Case 2}. $2s<n<4s,\ \alpha>2,\ \beta>2$\ and \eqref{int7} hold.

\begin{lemma}\label{prop1}
Assume $c,d\in \mathbb{R}$ satisfy
\begin{equation}\label{PROP}
\begin{cases}
\mu_{1}k^\frac{{2^{\ast}-2}}{2}+\frac{\alpha\gamma}{2^{\ast}}k^\frac{{\alpha-2}}{2}l^\frac{{\beta}}{2}\geq 1,\\

\mu_{2}l^\frac{{2^{\ast}-2}}{2}+\frac{\beta\gamma}{2^{\ast}}k^\frac{{\alpha}}{2}l^\frac{{\beta-2}}{2}\geq1,\\

k,l>0.
\end{cases}
\end{equation}

If\ \  $2s<n<4s,\ \alpha,\beta>2$\ and \eqref{int7} hold, then\  $c+d\geq k+l,$ where $(k,l)\in {\mathbb R}^2 $ is the unique solution of \eqref{int6}.
\end{lemma}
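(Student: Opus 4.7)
The plan is to reduce the two-dimensional constrained inequality to a one-variable monotonicity statement on the curve $F_1=0$ (or $F_2=0$) via the scaling invariance of the constraints, and then to invoke a monotonicity analysis of $c+l(c)$ parallel to the proof of Lemma \ref{lem4} but with signs adapted to Case 2.

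\textbf{Step 1 (Scaling reduction).} The functions $g_i:=F_i+1$ are positively homogeneous of degree $(2^*-2)/2$ under $(c,d)\mapsto\lambda(c,d)$. Given feasible $(c,d)$, choose $\lambda:=\min\{g_1(c,d),g_2(c,d)\}^{-2/(2^*-2)}\in(0,1]$; then $(\lambda c,\lambda d)$ still satisfies \eqref{PROP}, exactly one of $F_1=0$ or $F_2=0$ becomes active while the other remains $\geq 0$, and $\lambda(c+d)\leq c+d$. Hence it suffices to prove the conclusion for the scaled pair. By the symmetry $(\alpha,\mu_1)\leftrightarrow(\beta,\mu_2)$, I assume without loss of generality that $F_1(c,d)=0$, so $d=l(c)$ with $c\in(0,\mu_1^{-2/(2^*-2)})$ and $F_2(c,d)\geq 0$.

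\textbf{Step 2 (Monotonicity of $h(c):=c+l(c)$).} This is the heart of the argument. Reusing the explicit formulas for $l'(c)$ and $l''(c)$ from the proof of Lemma \ref{lem4} but with $\alpha,\beta>2$ and $2^*>4$, one checks that $l'(c)\to-\infty$ at both endpoints of the interval and that the bracketed factor in $l''(c)$ is linear in $c^{(2^*-2)/2}$ with positive slope and negative intercept. Hence $l'$ admits a unique interior critical point $c_1$, which is its maximum, and a direct computation gives
$$
l'(c_1)=-\Bigl(\frac{2^*(2^*-2)\mu_1}{2\alpha\gamma}\Bigr)^{2/\beta}\Bigl(\frac{\alpha-2}{\beta-2}\Bigr)^{(\beta-2)/\beta}.
$$
Raising to the $\beta/2$ power, the inequality $l'(c_1)\leq-1$ becomes exactly the first half of \eqref{int7}. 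Hence $h'(c)=1+l'(c)\leq 0$ throughout $(0,\mu_1^{-2/(2^*-2)})$.

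\textbf{Step 3 (Feasible set on the curve and conclusion).} Define $G(c):=F_2(c,l(c))$. Since $l(c)\to+\infty$ as $c\to 0^+$ (the exponent $(2-\alpha)/\beta$ is negative), $G(0^+)=+\infty$; at $c=\mu_1^{-2/(2^*-2)}$, $l(c)=0$ gives $G=-1$. Observing that $(F_1,F_2)=\nabla\Psi$ with
$$
\Psi(c,d):=\frac{2\mu_1}{2^*}c^{2^*/2}+\frac{2\mu_2}{2^*}d^{2^*/2}+\frac{2\gamma}{2^*}c^{\alpha/2}d^{\beta/2}-(c+d),
$$
and using $l'=-\partial_cF_1/\partial_dF_1$, I get $G'(c)=-\det M(c,l(c))/\partial_dF_1$, where $M$ is the (symmetric) Hessian of $\Psi$. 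Simplifying at a solution $(k,l)$ via \eqref{int6} yields $\det M=\frac{2^*-2}{4kl}\bigl[(2^*-2)-X(k+l)\bigr]$ with $X:=\frac{\alpha\beta\gamma}{2^*}k^{(\alpha-2)/2}l^{(\beta-2)/2}$, which combined with the monotonicity of Step 2 forces $G'<0$ at every zero of $G$, hence a unique zero $c=k$ (also proving the uniqueness claim of Remark \ref{rem5}). Thus the feasible set on $F_1=0$ is $(0,k]$, where the non-increasing $h$ gives $c+d=h(c)\geq h(k)=k+l$, as desired. The symmetric case arising when Step 1 produces $F_2=0$ uses the second half of \eqref{int7}.

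\textbf{Main obstacle.} The technical core is Step 2: carefully reproducing the sign analysis of $l''$ with the inequalities reversed relative to Case 1 (Lemma \ref{lem4}), and algebraically matching $l'(c_1)\leq-1$ to the form in \eqref{int7}. Step 3 relies on the gradient structure of $(F_1,F_2)$, from which both the symmetric form of the Hessian and the uniqueness of zeros of $G$ drop out as a pleasant consequence.
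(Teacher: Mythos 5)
Your argument is essentially correct and reaches the conclusion by a genuinely different route from the paper. The paper's proof of Lemma \ref{prop1} sets $y=c+d$, $x=c/d$, rewrites the two constraints as $y^{(2^{\ast}-2)/2}\ge f_1(x)$ and $y^{(2^{\ast}-2)/2}\ge f_2(x)$, and uses \eqref{int7} to show that $f_1$ is strictly decreasing and $f_2$ strictly increasing in the ratio variable, so that $y^{(2^{\ast}-2)/2}\ge\max\{f_1(x),f_2(x)\}\ge f_1(x_0)=y_0^{(2^{\ast}-2)/2}$; uniqueness of $(k,l)$ and the inequality come out in one stroke. You instead scale onto the boundary curve $F_1=0$ (legitimate, since $F_i+1$ is homogeneous of degree $(2^{\ast}-2)/2$), prove that $h(c)=c+l(c)$ is non-increasing --- this is the Case 2 analogue of Lemma \ref{lem4}, and your value of $l'(c_1)$ and its identification with the first bound in \eqref{int7} are correct, noting $(\tfrac{2-\beta}{2-\alpha})^{(2-\beta)/\beta}=(\tfrac{\alpha-2}{\beta-2})^{(\beta-2)/\beta}$ and $\tfrac{2^{\ast}(2^{\ast}-2)}{2}=\tfrac{4ns}{(n-2s)^2}$ --- and then locate the feasible portion of the curve through $G(c)=F_2(c,l(c))$. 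Your Hessian computation also checks out: at a solution of \eqref{int6} one indeed gets $\det M=\tfrac{2^{\ast}-2}{4kl}\bigl[(2^{\ast}-2)-X(k+l)\bigr]$, and in fact $G'=\tfrac{2^{\ast}-2}{2l}\,h'$ there, so the sign of $G'$ at a zero is exactly the sign of $h'$. Your route gives a more detailed picture of the curve geometry (and recovers Remark \ref{rem5}); the paper's ratio substitution is shorter and avoids the second-derivative analysis altogether.

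One point needs tightening. Because \eqref{int7} allows equality, Step 2 only yields $h'\le 0$, with $h'(c)<0$ for all $c$ except possibly the single point $c_1$ where $l'$ attains its maximum; hence what you actually obtain is $G'\le 0$ at every zero of $G$, strict unless that zero happens to sit at $c_1$ with $\gamma$ equal to the first bound in \eqref{int7}. Your assertion that $G'<0$ at every zero --- on which both the uniqueness of the zero and the identification of the feasible set as $(0,k]$ rest --- therefore has a gap in this borderline configuration: a tangential zero of $G$ at $c_1$, with $G$ returning to positive values afterwards, is not excluded by the sign information alone. The fix is easy: prove uniqueness of the solution of \eqref{int6} directly from the strict monotonicity of both $c\mapsto c+l(c)$ and $d\mapsto d+k(d)$ (strict decrease survives a derivative vanishing at one point, exactly as strict monotonicity of $f_1,f_2$ does in the paper), together with the strict decrease of $c\mapsto l(c)$: two distinct solutions with $k<k'$ would force $k+l>k'+l'$ from the first curve and $k+l<k'+l'$ from the second. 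Uniqueness then gives $G<0$ to the right of $k$, which is all Step 3 needs, and the rest of your argument goes through unchanged.
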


\begin{proof}

Let\ $y=c+d,x=\frac{c}{d},y_{0}=k+l,x_{0}=\frac{k}{l}$,  by \eqref{PROP} and \eqref{int6} we have

\begin{align*}
y^{\frac{2^{\ast}-2}{2}}&\geq \frac{(x+1)^{\frac{2^{\ast}-2}{2}}}{\mu_{1}x^{\frac{2^{\ast}-2}{2}}+\frac{\alpha\gamma}{2^{\ast}}x^{\frac{\alpha-2}{2}}}\triangleq f_{1}(x),\ \ \ y_{0}^{\frac{2^{\ast}-2}{2}}=f_{1}(x_{0}),\\
y^{\frac{2^{\ast}-2}{2}}&\geq \frac{(x+1)^{\frac{2^{\ast}-2}{2}}}{\mu_{2}+\frac{\beta\gamma}{2^{\ast}}x^{\frac{\alpha}{2}}}\triangleq f_{2}(x),\ \ \ y_{0}^{\frac{2^{\ast}-2}{2}}= f_{2}(x_{0}).\\
\end{align*}

Then

\begin{align*}
   f'_{1}(x)&=\frac{\alpha\gamma(x+1)^{\frac{2^{\ast}-4}{2}}x^{\frac{\alpha-4}{2}}}{22^{\ast}(\mu_{1}x^{\frac{2^{\ast}-2}{2}}+\frac{\alpha\gamma}{2^{\ast}}x^{\frac{\alpha-2}{2}})^{2}}[-\frac{2^{\ast}(2^{\ast}-2)\mu_{1}}{\alpha\gamma}x^{\frac{\beta}{2}}+\beta
   x-(\alpha-2)],\\
   f'_{2}(x)&=\frac{\beta\gamma(x+1)^{\frac{2^{\ast}-4}{2}}}{22^{\ast}(\mu_{2}+\frac{\beta\gamma}{2^{\ast}}x^{\frac{\alpha}{2}})^{2}}[(\beta-2)x^{\frac{\alpha}{2}}-\alpha
   x^{\frac{\alpha-2}{2}}+\frac{2^{\ast}(2^{\ast}-2)\mu_{2}}{\beta\gamma}].\\
 \end{align*}

 Let

 \begin{align*}
    g_{1}(x)&=-\frac{2^{\ast}(2^{\ast}-2)\mu_{1}}{\alpha\gamma}x^{\frac{\beta}{2}}+\beta x-(\alpha-2),\\
    g_{2}(x)&=(\beta-2)x^{\frac{\alpha}{2}}-\alpha x^{\frac{\alpha-2}{2}}+\frac{2^{\ast}(2^{\ast}-2)\mu_{2}}{\beta\gamma},\\
\end{align*}then

\begin{align*}
        &g'_{1}(x)=0\Leftrightarrow  x=(\frac{2\alpha\gamma}{2^{\ast}(2^{\ast}-2)\mu_{1}})^{\frac{2}{\beta-2}}:= x_{1},\\
       &g'_{1}(x)>0\Leftrightarrow x<(\frac{2\alpha\gamma}{2^{\ast}(2^{\ast}-2)\mu_{1}})^{\frac{2}{\beta-2}},\\
       &g'_{1}(x)<0\Leftrightarrow x>(\frac{2\alpha\gamma}{2^{\ast}(2^{\ast}-2)\mu_{1}})^{\frac{2}{\beta-2}},\\
       &g'_{2}(x)=0\Leftrightarrow x=\frac{\alpha-2}{\beta-2}:= x_{2},\\
       &g'_{2}(x)>0\Leftrightarrow x>\frac{\alpha-2}{\beta-2},\ g'_{2}(x)<0\Leftrightarrow x<\frac{\alpha-2}{\beta-2}.
\end{align*}

Therefore, $g_{1}(x)$ increases in the interval $(0,(\frac{2\alpha\gamma}{2^{\ast}(2^{\ast}-2)\mu_{1}})^{\frac{2}{\beta-2}})$\ and decreases in the interval
$((\frac{2\alpha\gamma}{2^{\ast}(2^{\ast}-2)\mu_{1}})^{\frac{2}{\beta-2}},+\infty)$.
$g_{2}(x)$ decreases in the interval $(0,\frac{\alpha-2}{\beta-2})$ and increases in the interval $(\frac{\alpha-2}{\beta-2},+\infty).$

Hence

\begin{align*}
g_{1}(x)_{max}&=g_{1}(x_{1})=(\beta-2)(\frac{2\alpha\gamma}{2^{\ast}(2^{\ast}-2)\mu_{1}})^{\frac{2}{\beta-2}}-(\alpha-2),\\
g_{2}(x)_{min}&=g_{2}(x_{2})=-2(\frac{\alpha-2}{\beta-2})^{\frac{\alpha-2}{2}}+\frac{2^{\ast}(2^{\ast}-2)\mu_{2}}{\beta\gamma}.\\
\end{align*}

Then, by \eqref{int7},\ we have
$$
g_{1}(x)_{max}\leq 0,\ g_{2}(x)_{min}\geq 0.
$$

Therefore, $f_{1}(x)$\ is strictly decreasing in $(0,+\infty)$ and $f_{2}(x)$\ is strictly increasing in $(0,+\infty).$
Due to the fact that

$$
\lim \limits_{x\to 0}f_1(x)=+\infty,\quad \lim \limits_{x\to+\infty}f_2(x)=+\infty,
$$
there exists a unique \ $x_0>0$, such that\ $f_1(x_0)=f_2(x_0)$\ , which gives the uniqueness of\ $(k,l)$.

Since\ $f_1(x)\ge f_1(x_0)$\ for\ $x\le x_0$\ and\ $f_2(x)\ge f_2(x_0)$\ for\ $x\ge x_0$, we get

\begin{align*}
y^{\frac{2^{\ast}-2}{2}}&\geq max\{f_{1}(x),{f_2(x)}\},\\
                        &=f_1(x_0)=y_{0}^{\frac{2^{\ast}-2}{2}},
\end{align*}which means $c+d\geq k+l.$
\end{proof}

\section{Proof of Theorem 1.1}\label{S3}

\begin{proof}
By Lemma \ref{lem2}, we obtain that when $-\infty<\gamma< 0$ and if $A$ is attained by a couple\ $(u,v)\in\mathbb{N},$ then\ $(u,v)$ \  is a solution of \eqref{int1}.
For any  $(u,v)\in \mathbb{N},$

\begin{align*}
 ||u||^{2}_{D_{s}(\mathbb{R}^{n})}&=\int_{\mathbb{R}^{n}}\mu_{1}|u|^{2^{\ast}}+\frac{\alpha\gamma}{2^{\ast}}|u|^{\alpha}|v|^{\beta}dx\\
                         &\leq\int_{\mathbb{R}^{n}}\mu_{1}|u|^{2^{\ast}}dx\leq \mu_{1}S_{s}^{-\frac{2^{\ast}}{2}}(||u||^{2}_{D_{s}(\mathbb{R}^{n})})^{\frac{2^{\ast}}{2}}.
\end{align*}

Therefore

$$
||u||^{2}_{D_{s}(\mathbb{R}^{n})}\geq\mu_{1}^{-\frac{n-2s}{2s}}S^{\frac{n}{2s}}_{s}.
$$

Similarly

$$
||v||^{2}_{D_{s}(\mathbb{R}^{n})}\geq\mu_{2}^{-\frac{n-2s}{2s}}S^{\frac{n}{2s}}_{s}.
$$

Hence
\begin{equation*}
A\geq\frac{s}{n}(||u||^{2}_{D_{s}(\mathbb{R}^{n})}+||v||^{2}_{D_{s}(\mathbb{R}^{n})}\geq\frac{s}{n}(\mu_{1}^{-\frac{n-2s}{2s}}+\mu_{2}^{-\frac{n-2s}{2s}})S^{\frac{n}{2s}}_{s}.
\end{equation*}

By lemma 2.12\ in \cite{GLZ},\ we know\ $w_{\mu_{i}}=(\frac{S_{S}}{\mu_{i}})^{\frac{1}{2^{\ast}-2}}\overline{u}_{\epsilon,y}=(\frac{1}{\mu_{i}})^{\frac{1}{2^{\ast}-2}}U_{\epsilon,y}$
is the solution of the equation

$$
(-\Delta)^{s}u= \mu_{i}|u|^{2^{\ast}-2}u, \ \ \ \   \text{in} \ \ \mathbb{R}^{n}
$$for $i= 1,2.$

Let\ $e_{1}=(1,0,\cdots 0)\in \mathbb{R}^{n},\ (U(X),V_{R}(X))=(w_{\mu_{1}}(x),\ w_{\mu_{2}}(x+Re_{1}),$
where R is a positive constant. Since \ $V_{R}(x)\in D_s(\mathbb{R}^{n})$ is a solution of

$$
(-\Delta)^{s}u= \mu_{2}|u|^{2^{\ast}-2}u \ \ \ \ \text{in} \ \ \mathbb{R}^{n},
$$
we have, $V_{R}(X)\rightharpoonup0 \ in \ L^{2^{\ast}}(\mathbb{R}^{n})$\ as $R\rightarrow +\infty,$\ hence

\begin{align*}
 \lim \limits _{R\rightarrow +\infty}\int_{\mathbb{R}^{n}}U^{\alpha}V_{R}^{\beta}dx
        &=  \lim \limits _{R\rightarrow +\infty}\int_{\mathbb{R}^{n}}U^{\alpha}V_{R}^{\frac{\alpha}{2^{\ast}-1}}V_{R}^{\frac{2^{\ast}(\beta-1)}{2^{\ast}-1}}dx,\\
        &\leq \lim \limits _{R\rightarrow +\infty}(\int_{\mathbb{R}^{n}}U^{2^{\ast}-1}V_{R}dx)^{\frac{\alpha}{2^{\ast}-1}}(\int_{R^{n}}V_{R}^{2^{\ast}}dx)^{\frac{\beta-1}{2^{\ast}-1}}\rightarrow 0.
\end{align*}

We will show that for $R>0$ sufficiently large, the system

\begin{equation}\label{pr1}
\begin{cases}
||U||^{2}_{D_{s}(\mathbb{R}^{n})}=\int_{\mathbb{R}^{n}}\mu_{1}|U|^{2^{\ast}}dx \\  \ \ \ \ \ \ \ \ \ \ \ \ \ \ \ \ = (t_{R})^{\frac{2^{\ast}-2}{2}}\int_{\mathbb{R}^{n}}\mu_{1}|U|^{2^{\ast}}dx
+(t_{R})^{\frac{\alpha-2}{2}}(s_{R})^{\frac{\beta}{2}}\int_{\mathbb{R}^{n}}\frac{\alpha\gamma}{2^{\ast}}U^{\alpha}V_{R}^{\beta}dx,\\ ||V_{R}||^{2}_{D_{s}(\mathbb{R}^{n})}=\int_{\mathbb{R}^{n}}\mu_{2}|V_{R}|^{2^{\ast}}dx\\\
\ \ \ \ \ \ \ \ \ \ \ \ \ \ \
=(s_{R})^{\frac{2^{\ast}-2}{2}}\int_{\mathbb{R}^{n}}\mu_{2}|V_{R}|^{2^{\ast}}dx
+(t_{R})^{\frac{\alpha}{2}}(s_{R})^{\frac{\beta-2}{2}}\int_{\mathbb{R}^{n}}\frac{\alpha\gamma}{2^{\ast}}U^{\alpha}V_{R}^{\beta}dx,\\
\end{cases}
\end{equation}
has a solution\ $(t_{R},s_{R})$ with
\[
\lim \limits _{R\rightarrow +\infty}(|t_{R}-1|+|s_{R}-1|)=0\ \ \ \ \ \ \ \ \ \ \ \ \ \ \ (\star)
\]
which implies that

$$
(\sqrt{t_{R}}U,\sqrt{S_{R}}V_{R})\in \mathbb{N}.
$$

Let us assume $(\star)$ first, then
\begin{align*}
       A&=\inf \limits_{(u,v)\in \mathbb{N}}E(u,v)\leq E(\sqrt{t_{R}}U,\sqrt{S_{R}}V_{R}),\\
        &=\frac{s}{n}(t_{R}||U||^{2}_{D_{s}(\mathbb{R}^{n})})+s_{R}||V_{R}||^{2}_{D_{s}(\mathbb{R}^{n})}),\\
        &\leq \frac{s}{n}(t_{R}\mu_{1}^{-\frac{n-2s}{2s}}||U_{\varepsilon, y}||^{2}_{D_{s}(\mathbb{R}^{n})}+s_{R}\mu_{2}^{-\frac{n-2s}{2s}}||U_{\varepsilon ,y}||^{2}_{D_{s}(\mathbb{R}^{n})}).\\
        \end{align*}

Let\ $R\rightarrow +\infty $, we get

$$
A \leq \frac{s}{n}(\mu_{1}^{-\frac{n-2s}{2s}}+\mu_{2}^{-\frac{n-2s}{2s}})S^{\frac{n}{2s}}_{s}.
$$

Therefore

\begin{equation}\label{pr2}
A=\frac{s}{n}(\mu_{1}^{-\frac{n-2s}{2s}}+\mu_{2}^{-\frac{n-2s}{2s}})S^{\frac{n}{2s}}_{s}.
\end{equation}

Suppose that $A$ is attained by some\ $(u,v)\in \mathbb{N} $, then $E(u,v)=A$. By Lemma \ref{lem2}, we know\ $(u,v)$\ is a nontrivial solution of \eqref{int1}.\ By Strong maximum principle for fractional Laplacian( see,
Proposition 2.17 in \cite{LS}, Lemma 6 in \cite{SV2}) and comparison principle in \cite{XR}, we may assume that $u>0,\ v>0$\ and

$$
\int_{\mathbb{R}^{n}}|u|^{\alpha}|v|^{\beta}dx>0,
$$then

$$
||u||^{2}_{D_{s}(\mathbb{R}^{n})}<\int_{\mathbb{R}^{n}}\mu_{1}|u|^{2^{\ast}}dx\leq \mu_{1}S_{s}^{-\frac{2^{\ast}}{2}}(||u||^{2}_{D_{s}(\mathbb{R}^{n})})^{\frac{2^{\ast}}{2}}.
$$

Hence

$$
||u||^{2}_{D_{s}(\mathbb{R}^{n})}>\mu_{1}^{-\frac{n-2s}{2s}}S^{\frac{n}{2s}}_{s}.
$$

Similarly, we have

$$
||v||^{2}_{D_{s}(\mathbb{R}^{n})}>\mu_{2}^{-\frac{n-2s}{2s}}S^{\frac{n}{2s}}_{s}.
$$

Then

$$
A=E(u,v)=\frac{s}{n}(||u||^{2}_{D_{s}(\mathbb{R}^{n})}+||v||^{2}_{D_{s}(\mathbb{R}^{n})})>\frac{s}{n}(\mu_{1}^{-\frac{n-2s}{2s}}+\mu_{2}^{-\frac{n-2s}{2s}})S^{\frac{n}{2s}}_{s},
$$which contradicts to \eqref{pr2}.  Therefore, $A$ is not be obtained.

Now, we claim $(\star)$,

{\bf Proof of $(\star)$.}  Let

$$
\theta:\triangleq\frac{\int_{\mathbb{R}^{n}}U^{\alpha}V_{R}^{\beta}dx}{\int_{\mathbb{R}^{n}}\mu_{1}|U|^{2^{\ast}}dx},
$$
we have\ $\theta\rightarrow0$  as R sufficiently large.\ Then \eqref{pr1} has a solution is equivalent to that the following system has a solution at the neighbourhood of point $(1,1)$.

\begin{equation*}
\begin{cases}
k^\frac{{2^{\ast}-2}}{2}+\frac{\alpha\gamma}{2^{\ast}}k^\frac{{\alpha-2}}{2}l^\frac{{\beta}}{2}\theta=1,\\

l^\frac{{2^{\ast}-2}}{2}+\frac{\beta\gamma}{2^{\ast}}k^\frac{{\alpha}}{2}l^\frac{{\beta-2}}{2}\theta=1.
\end{cases}
\end{equation*}

By Taylor expansion at $(1,1)$, we have
\begin{equation*}
\begin{cases}
(1+\frac{{2^{\ast}-2}}{2}(k-1)+O((k-1)^{2}))\\
+\frac{\alpha\gamma}{2^{\star}}(1+\frac{{\alpha-2}}{2}(k-1)+O((k-1)^{2}))(1+\frac{{\beta}}{2}(l-1)+O(l-1)^{2})\theta=1,\\

(1+\frac{{2^{\ast}-2}}{2}(l-1)+O((l-1)^{2}))\\
+\frac{\beta\gamma}{2^{\star}}(1+\frac{{\alpha}}{2}(k-1)+O((k-1)^{2}))(1+\frac{{\beta}-2}{2}(l-1)+O((l-1)^{2}))\theta=1.
\end{cases}
\end{equation*}

Therefore

\begin{equation*}
\begin{cases}
k-1=-\frac{\alpha\gamma}{2^{\ast}}(\frac{\alpha-2}{2^{\ast}-2})(k-1)\theta-\frac{\alpha\gamma}{2^{\ast}}(\frac{\beta}{2^{\ast}-2})(l-1)\theta-\frac{\alpha\gamma}{2^{\ast}}(\frac{2}{2^{\ast}-2})\theta\\\ \ \ \ \ \ \ \ \
+O((k-1)^{2}+(l-1)^{2}),\\

l-1=-\frac{\beta\gamma}{2^{\ast}}(\frac{\alpha}{2^{\ast}-2})(k-1)\theta-\frac{\beta\gamma}{2^{\ast}}(\frac{\beta-2}{2^{\ast}-2})(l-1)\theta-\frac{\beta\gamma}{2^{\ast}}(\frac{2}{2^{\ast}-2})\theta\\ \ \ \ \ \ \ \ \ \
+O((k-1)^{2}+(l-1)^{2}).
\end{cases}
\end{equation*}

Let

$$
a_{1}=-\frac{\alpha\gamma}{2^{\ast}}(\frac{\alpha-2}{2^{\ast}-2}),b_{1}=-\frac{\alpha\gamma}{2^{\ast}}(\frac{\beta}{2^{\ast}-2}),c_{1}=-\frac{\alpha\gamma}{2^{\ast}}(\frac{2}{2^{\ast}-2}),
$$

$$
a_{2}=-\frac{\beta\gamma}{2^{\ast}}(\frac{\alpha}{2^{\ast}-2}),b_{2}=-\frac{\beta\gamma}{2^{\ast}}(\frac{\beta-2}{2^{\ast}-2}),c_{2}=-\frac{\beta\gamma}{2^{\ast}}(\frac{2}{2^{\ast}-2}),
$$we have that

\begin{equation}\label{pr3}
\begin{cases}
k-1=\left(a_{1}(k-1)+b_{1}(l-1)+c_{1}\right)\theta+O((k-1)^{2}+(l-1)^{2}),\\

l-1=\left(a_{2}(k-1)+b_{2}(l-1)+c_{2}\right)\theta+O((k-1)^{2}+(l-1)^{2}).
\end{cases}
\end{equation}

Let

\begin{equation*}
x=\left(
  \begin{array}{c}
     k-1  \\
    l-1  \\
  \end{array}
\right),
\quad   B=\left(
  \begin{array}{cc}
     a_{1}& b_{1}  \\
     a_{2}  &  b_{2}  \\
  \end{array}
\right),
\quad c=\left(
  \begin{array}{c}
     c_{1} \\
     c_{2}\\
  \end{array}
\right),
\end{equation*}
then

$$
x=\theta Bx+c\theta +O(x^2).
$$
Let$$T(x)\triangleq \theta Bx+c\theta +O(x^2),$$
then
$$
\|T(x)\|\le \theta \|B\|\|x\|+\|c\|\theta+O(\|x\|^2).
$$

If we choose $x\in {\mathbb R^2}$ with $\|x\|\le 2\|c\|\theta$, we get
$$
\|T(x)\|\le (2\|B\|\|c\|\theta+\|c\|+O(\theta))\theta\le 2\|c\|\theta,
$$when $\theta$ is small enough. Since $T$ is continuous and $\theta\rightarrow 0,$ as $R\rightarrow\infty$, by Brouwer's fixed point theorem, we get
that the system \eqref{pr1} has a solution $(t_{R},s_{R})$ for all large $R$ \ with

$$
\lim \limits _{R\rightarrow +\infty}(|t_{R}-1|+|s_{R}-1|)=0.
$$
\end{proof}

\section{Proof of Theorem 1.2}\label{S4}

\begin{proof}

By Remark \ref{rem2}, we have
\begin{equation}
A\leq E(\sqrt{k_{0}}U_{\varepsilon, y},\sqrt{l_{0}}U_{\varepsilon, y})=\frac{s}{n}(k_{0}+l_{0})S_{s}^{\frac{n}{2s}}.
\end{equation}

Let\ $(u_{i},\ v_{i})\in \mathbb{N}$ be a minimizing sequence for $A$,
that is $E(u_{i},v_{i})\rightarrow A$ as\ $n\rightarrow\infty.$

Define
$$
c_{i}=(\int_{\mathbb{R}^{n}}|u_{i}|^{2\ast}dx)^{\frac{2}{2\ast}},\ d_{i}=(\int_{\mathbb{R}^{n}}|v_{i}|^{2\ast}dx)^{\frac{2}{2\ast}},
$$then

\begin{align*}
S_{s}c_{i}&\leq ||u_{i}||^{2}_{D_{s}(\mathbb{R}^{n})}=\int_{\mathbb{R}^{n}}\mu_{1}|u_{i}|^{2\ast}+\frac{\alpha\gamma}{2^{\ast}}|u_{i}|^{\alpha}|v_{i}|^{\beta}dx\\
          &\leq \mu_{1}(c_{i})^{\frac{2^{\ast}}{2}}+\frac{\alpha\gamma}{2^{\ast}}(\int_{\mathbb{R}^{n}}|u_{i}|^{2^{\ast}})^{\frac{\alpha}{2^{\ast}}}(\int_{\mathbb{R}^{n}}|v_{i}|^{2^{\ast}})^{\frac{\beta}{2^{\ast}}}\\
          &\leq\mu_{1}(c_{i})^{\frac{2^{\ast}}{2}}\frac{\alpha\gamma}{2^{\ast}}(c_{i})^{\frac{\alpha}{2}}(d_{i})^{\frac{\beta}{2}},
\end{align*}

\begin{align*}
S_{s}d_{i}&\leq ||v_{i}||^{2}_{D_{s}(\mathbb{R}^{n})}=\int_{\mathbb{R}^{n}}\mu_{1}|v_{i}|^{2\ast}+\frac{\alpha\gamma}{2^{\ast}}|u_{i}|^{\alpha}|v_{i}|^{\beta}dx\\
          &\leq \mu_{2}(d_{i})^{\frac{2^{\ast}}{2}}\frac{\beta\gamma}{2^{\ast}}(c_{i})^{\frac{\alpha}{2}}(d_{i})^{\frac{\beta}{2}}.
\end{align*}

Dividing both side of inequality by $S_{s}c_{i}$\ and\ $S_{s}d_{i}$. Let

$$
\widetilde{c_{i}}=\frac{c_{i}}{S_{s}^{\frac{2}{2^{\ast-2}}}},\quad\quad \widetilde {d_{i}}=\frac{d_{i}}{S_{s}^{\frac{2}{2^{\ast-2}}}},
$$we get

\begin{equation*}
\begin{cases} \mu_{1}\widetilde{c_{i}}^\frac{{2^{\ast}-2}}{2}+\frac{\alpha\gamma}{2^{\star}}\widetilde{c_{i}}^\frac{{\alpha-2}}{2}\widetilde{d_{i}}^\frac{{\beta}}{2}\geq 1,\\

\mu_{2}\widetilde{c_{i}}^\frac{{2^{\ast}-2}}{2}+\frac{\beta\gamma}{2^{\star}}\widetilde{c_{i}}^\frac{{\alpha}}{2}\widetilde{d_{i}}^\frac{{\beta-2}}{2}\geq1,
\end{cases}
\end{equation*}
that is

$$
F_{1}(\widetilde{c_{i}},\widetilde{d_{i}})\geq0,\ F_{2}(\widetilde{c_{i}},\widetilde{d_{i}})\geq0.
$$

Consequently, for the case \ $2s<n<4s,\ \alpha>2,\ \beta>2$ and  \eqref{int7} hold, Lemma \ref{prop1} ensures that
$$
\widetilde{c_{i}}+\widetilde{d_{i}}\geq k+l=k_{0}+l_{0}.
$$

For the case  $n>4s ,1<\alpha,\beta<2$\ and  \eqref{int8} hold, Lemma \ref{prop2} ensures
$$
\widetilde{c_{i}}+\widetilde{d_{i}}\geq k+l=k_{0}+l_{0}.
$$

Therefore
$$
c_{i}+d_{i}\geq (k_{0}+l_{0})S_{s}^{\frac{2}{2^{\ast}-2}}=(k_{0}+l_{0})S_{s}^{\frac{n-2s}{2s}}.
$$

Since\ $(u_{i},v_{i})\in \mathbb{N}$ is a minimizing sequence for $A$, we have

\begin{equation}\label{gt1}
 E(u_{i},v_{i})=\frac{s}{n}(||u_{i}||^{2}_{D_{s}(\mathbb{R}^{n})}+||v_{i}||^{2}_{D_{s}(\mathbb{R}^{n})}).
\end{equation}

By the definition of $S_{s}$ and \eqref{gt1}, we have,

$$
S_{s}(c_{i}+d_{i})\leq(||u_{i}||^{2}_{D_{s}(\mathbb{R}^{n})}+||v_{i}||^{2}_{D_{s}(\mathbb{R}^{n})}) =\frac{n}{s}E(u_{i},v_{i})=\frac{n}{s}A+o(1).
$$

Since
$$
S_{s}(c_{i}+d_{i})\geq(k_{0}+l_{0})S_{s}^{\frac{n}{2s}},\ A\leq\frac{s}{n}(k_{0}+l_{0})S_{s}^{\frac{n}{2s}},
$$we have

$$
(k_{0}+l_{0})S_{s}^{\frac{n}{2s}}\leq S_{s}(c_{i}+d_{i})\leq(k_{0}+l_{0})S_{s}^{\frac{n}{2s}},
$$that is

$$
c_{i}+d_{i}\rightarrow(k_{0}+l_{0})S_{s}^{\frac{2}{2^{\ast}-2}}\  as\ i\rightarrow+\infty,
$$thus

$$
A=\lim \limits _{i\rightarrow +\infty}E(u_{i},v_{i})\geq\lim \limits _{i\rightarrow +\infty}\frac{s}{n}S_{s}(c_{i}+d_{i})\geq\frac{s}{n}(k_{0}+l_{0})S_{s}^{\frac{n}{2s}}.
$$

Hence
$$
A=\frac{s}{n}(k_{0}+l_{0})S_{s}^{\frac{n}{2s}}=E(\sqrt{k_{0}}U_{\varepsilon, y},\sqrt{l_{0}}U_{\varepsilon, y}).
$$
\end{proof}

\section{Proof of Theorem 1.3}\label{S5}
In order to proof Theorem \ref{Th3}, we use a result of Z. Guo, S. Luo and W. Zou in \cite{GLZ}.

\begin{theorem}[Theorem 1.1 of \cite{GLZ}]\label{Th5}
Assume $(H)$ holds, where
\begin{equation*}
(H)=\left\{\begin{array}{ll}
    1<\alpha,\ \beta<2, &\mbox{ if } 4s<N<6s;\\
    \\
    \alpha,\ \beta>1,& \mbox{ if } N\ge 6s.
    \end{array}\right.
\end{equation*}

Then \eqref{int1} has a positive ground state solution $(U,\ V)$ for all $\gamma>0$, which is radially symmetric decreasing with the following decay condition
$$
U(x),\  V(x)\le C(1+|x|)^{2s-n}.
$$

That is $E(U,V)=A,$ where
$$
\begin{array}{c}
A=\inf \limits_{(u,v)\in {\mathcal N}} E(u,v),\\
\\
{\mathcal N}=\left\{\begin{array}{c}
    (u,v)\in D_s({\mathbb R}^n)\times D_s({\mathbb R}^n)\backslash \{(0,0)\}:\|u\|^2_{D_s({\mathbb R}^n)}+\|u\|^2_{D_s({\mathbb R}^n)}=\\
    \\
    \int_{{\mathbb R}^n}(\mu_1|u|^{2*}+\mu_2|v|^{2^*}+\gamma|u|^{\alpha}|v|^{\beta})
    \end{array}\right\}.
\end{array}
$$
\end{theorem}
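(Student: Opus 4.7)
The plan is to realize $(U,V)$ as a minimizer of $E$ on the Nehari manifold $\mathcal{N}$ via the direct method, then to extract positivity, symmetry, and decay from standard properties of $(-\Delta)^{s}$. First I would verify the Nehari structure: for any $(u,v)\in D_s(\mathbb{R}^n)\times D_s(\mathbb{R}^n)$ with $u,v\not\equiv 0$ and $\int u^{\alpha}v^{\beta}>0$, there exists a unique pair $(t,\sigma)>0$ with $(tu,\sigma v)\in\mathcal{N}$, so $\mathcal{N}\neq\emptyset$, and the identity $E|_{\mathcal{N}}=\frac{s}{n}(\|u\|^{2}_{D_s}+\|v\|^{2}_{D_s})$ combined with the Sobolev inequality yields $A>0$.

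Next I would take a minimizing sequence $(u_i,v_i)\subset\mathcal{N}$ and replace it by Schwarz symmetric decreasing rearrangements: since $\gamma>0$, every nonlinear term is nondecreasing under rearrangement, while the Gagliardo seminorm is nonincreasing by the fractional P\'olya--Szeg\H{o} inequality, so we may assume each $u_i,v_i$ is nonnegative, radial, and radially nonincreasing. Boundedness in $D_s\times D_s$ is immediate from the identity above. The translation invariance is now killed by radiality; to eliminate the remaining scale invariance I would use a dilation of the form $(u_i,v_i)\mapsto (\lambda_i^{(n-2s)/2}u_i(\lambda_i\cdot),\lambda_i^{(n-2s)/2}v_i(\lambda_i\cdot))$ to normalize, e.g., $\|u_i\|_{2^\ast}=1$, and then extract weak limits $u_i\rightharpoonup U$, $v_i\rightharpoonup V$ in $D_s$.

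The main obstacle is proving that $(U,V)$ is nontrivial and attains the infimum. This requires the strict energy estimate
\begin{equation*}
A<\min\!\left\{\tfrac{s}{n}\bigl(\mu_1^{-(n-2s)/(2s)}+\mu_2^{-(n-2s)/(2s)}\bigr)S_s^{n/(2s)},\ \tfrac{s}{n}\mu_i^{-(n-2s)/(2s)}S_s^{n/(2s)}\bigr\}
\end{equation*}
which rules out escape of mass to infinity and vanishing of a single component. I would produce such an estimate by testing with $(\sqrt{k}\,U_{\varepsilon,y},\sqrt{l}\,U_{\varepsilon,y})$ where $(k,l)$ solves the algebraic system \eqref{int6}, and checking that the resulting energy level $\tfrac{s}{n}(k+l)S_s^{n/(2s)}$ lies strictly below these thresholds. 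The dimension/exponent hypothesis $(H)$ is precisely the condition that makes the coupling term $\gamma\int U^{\alpha}V^{\beta}$ large enough at the critical scale to dominate the ``decoupled'' energy; this is where the split between $4s<n<6s$ and $n\ge 6s$ originates, reflecting whether $U_{\varepsilon,y}^{\alpha+\beta}=U_{\varepsilon,y}^{2^\ast}$ is integrable against itself at the required rate. Given the strict inequality, a Brezis--Lieb type decomposition on both the $D_s$-norm and the $L^{2^\ast}$ norm together with the local compactness of radial $D_s$ functions into $L^{2^\ast-\epsilon}_{\mathrm{loc}}$ forces $u_i\to U$, $v_i\to V$ strongly, hence $(U,V)\in\mathcal{N}$ and $E(U,V)=A$.

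Finally, the Euler--Lagrange equations hold (by the same Lagrange-multiplier argument as in Lemma \ref{lem2}, using that the coupling term is now positive). Replacing $(U,V)$ by $(|U|,|V|)$ if necessary (which keeps membership in $\mathcal{N}$ and the energy unchanged when $\gamma>0$) and then applying the strong maximum principle for $(-\Delta)^{s}$ (e.g., [LS, Proposition 2.17]) yields $U,V>0$ in $\mathbb{R}^n$; radial monotonicity is inherited from the minimizing sequence. For the decay bound, I would use the Riesz-kernel representation: each component satisfies $(-\Delta)^{s}U=f$ with $f=\mu_1 U^{2^\ast-1}+\tfrac{\alpha\gamma}{2^\ast}U^{\alpha-1}V^{\beta}\in L^{1}\cap L^{\infty}$, so $U=I_{2s}\ast f$ where $I_{2s}(x)\sim |x|^{2s-n}$; a bootstrap combined with the decay rate of $f$ at infinity (forced by $U,V\in L^{2^\ast}$ and radial monotonicity) yields $U(x),V(x)\le C(1+|x|)^{2s-n}$.
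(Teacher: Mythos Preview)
This statement is not proved in the present paper at all: it is quoted as Theorem~1.1 of \cite{GLZ} and used purely as a black box in the proof of Theorem~\ref{Th3}. There is therefore no ``paper's own proof'' to compare your proposal against; the authors simply import the result, together with the companion estimate recorded as Lemma~\ref{lem6}, and invoke it via Remark~\ref{rem6}.

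That said, a couple of points in your sketch deserve correction if you intend it as an independent proof of the cited theorem. First, the manifold $\mathcal{N}$ in the statement carries a \emph{single} constraint (the sum of the two equations), so for $(u,v)\neq (0,0)$ one projects onto $\mathcal{N}$ by a single scalar $t>0$ via $t\mapsto (tu,tv)$, not by an independent pair $(t,\sigma)$; the two-parameter fibering belongs to the manifold $\mathbb{N}$ used elsewhere in the paper. Second, the relevant strict-energy threshold for $\mathcal{N}$ is exactly $\min\{E(u_{\mu_1},0),E(0,u_{\mu_2})\}$, since the semitrivial pairs lie in $\mathcal{N}$; the decoupled sum $\tfrac{s}{n}(\mu_1^{-(n-2s)/(2s)}+\mu_2^{-(n-2s)/(2s)})S_s^{n/(2s)}$ is not the natural obstacle here. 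The genuine work in \cite{GLZ}, and the place where hypothesis $(H)$ enters, is precisely establishing the strict inequality of Lemma~\ref{lem6}; your explanation that ``the coupling term is large enough at the critical scale'' is in the right spirit but does not yet identify the concrete test configuration or the exponent computation that forces the case split $4s<n<6s$ versus $n\ge 6s$. The remaining steps you list (rearrangement with fractional P\'olya--Szeg\H{o}, dilation normalization, Brezis--Lieb, strong maximum principle, Riesz-kernel decay) are the standard ingredients and are essentially what \cite{GLZ} carries out.
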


\begin{lemma}[A result after Lemma 3.1 in \cite{GLZ}]\label{lem6}
 Let $u_{\mu}=(\frac{S_{S}}{\mu})^{\frac{1}{2^{\ast}-2}}\overline{u}$ is a positive ground state solution of
$$
(-\Delta)^{s}u= \mu|u|^{2^{\ast}-2}u, \ \text{in}\ \ \mathbb{R}^{n}
$$ then

\begin{align*}
&A<min\{\inf \limits_{(u,0)\in \mathcal{N}}E(u,0),\inf \limits_{(0,v)\in\mathcal{ N}}E(0,v)\}\\
& =min\{E(u_{\mu_{1}},0),E(0,u_{\mu_{2}})\}\\
&=min\{\frac{s}{n}\mu_{1}^{-\frac{n-2s}{2s}}S_{s}^{\frac{n}{2s}},\frac{s}{n}\mu_{2}^{-\frac{n-2s}{2s}}S_{s}^{\frac{n}{2s}}\}
\end{align*}
\end{lemma}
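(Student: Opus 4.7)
The plan is to split Lemma \ref{lem6} into two independent pieces: first identify the two "boundary" infima $\inf_{(u,0)\in\mathcal{N}}E(u,0)$ and $\inf_{(0,v)\in\mathcal{N}}E(0,v)$ explicitly as the Sobolev-extremal values, and second show by a perturbation argument that the system-level infimum $A$ strictly beats the smaller of the two, exploiting the positive coupling $\gamma>0$.

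For the boundary infima, I would observe that if $(u,0)\in\mathcal{N}$ then the defining constraint of $\mathcal{N}$ collapses to the single-equation Nehari identity $\|u\|^2_{D_s(\mathbb{R}^n)}=\mu_1\int_{\mathbb{R}^n}|u|^{2^*}\,dx$, on which $E(u,0)=\tfrac{s}{n}\|u\|^2_{D_s(\mathbb{R}^n)}$. Combining this identity with the sharp Sobolev embedding \eqref{int2} yields $\|u\|^2_{D_s(\mathbb{R}^n)}\ge \mu_1^{-(n-2s)/(2s)}S_s^{n/(2s)}$, with equality exactly when $u$ is a Sobolev extremal rescaled to match the Nehari level, i.e.\ $u=u_{\mu_1}$ up to the translation/dilation invariance of \eqref{int4}. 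This identifies $\inf_{(u,0)\in\mathcal{N}}E(u,0)=E(u_{\mu_1},0)=\tfrac{s}{n}\mu_1^{-(n-2s)/(2s)}S_s^{n/(2s)}$, and the computation is symmetric on the $(0,v)$ stratum.

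For the strict inequality, assume without loss of generality $E(u_{\mu_1},0)\le E(0,u_{\mu_2})$, fix $v:=u_{\mu_2}$ centered at the same point as $u_{\mu_1}$ so that $C:=\gamma\int_{\mathbb{R}^n}u_{\mu_1}^\alpha v^\beta\,dx>0$, and look for a curve $(t(\sigma)u_{\mu_1},\sigma v)\in\mathcal{N}$ for small $\sigma>0$. The Nehari equation
\[
t^2\|u_{\mu_1}\|^2+\sigma^2\|v\|^2 = t^{2^*}\mu_1\int|u_{\mu_1}|^{2^*}\,dx + \sigma^{2^*}\mu_2\int|v|^{2^*}\,dx + t^\alpha\sigma^\beta C
\]
is solved by $t(0)=1$ and admits a continuous branch $\sigma\mapsto t(\sigma)\to 1$. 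Writing $t(\sigma)=1+\delta$ and using the single-equation Nehari identity to cancel the $O(1)$ terms, the leading balance reads $(2-2^*)\delta\|u_{\mu_1}\|^2 = \sigma^\beta C + o(\sigma^\beta)$, where the key input is that under hypothesis $(H)$ one always has $1<\beta<2$, so $\sigma^\beta$ dominates both $\sigma^2\|v\|^2$ and $\sigma^{2^*}\mu_2\int|v|^{2^*}$ as $\sigma\to 0^+$. Consequently $\delta(\sigma)=-\tfrac{C}{(2^*-2)\|u_{\mu_1}\|^2}\sigma^\beta + o(\sigma^\beta)$, yielding
\[
E(t(\sigma)u_{\mu_1},\sigma v)-E(u_{\mu_1},0) = \tfrac{s}{n}\bigl[(t(\sigma)^2-1)\|u_{\mu_1}\|^2+\sigma^2\|v\|^2\bigr] = -\tfrac{2s\,C}{n(2^*-2)}\sigma^\beta + o(\sigma^\beta),
\]
which is strictly negative for small $\sigma>0$. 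Hence $A\le E(t(\sigma)u_{\mu_1},\sigma v)<E(u_{\mu_1},0)$, and the same construction with roles swapped gives $A<E(0,u_{\mu_2})$.

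The chief obstacle I expect is the order comparison in the last expansion: one needs $\sigma^\beta$ to beat $\sigma^2$, which is only true for $\beta<2$. This is precisely where hypothesis $(H)$ enters, because when $n\ge 6s$ one has $2^*\le 3$, so $\alpha+\beta=2^*$ combined with $\alpha,\beta>1$ forces $1<\alpha,\beta<2$, while for $4s<n<6s$ the bound $\beta<2$ is imposed directly. A secondary point is that because $\alpha,\beta$ are non-integer the Nehari constraint is only $C^{1,\beta-1}$ near $\sigma=0$, so the existence of the branch $t(\sigma)$ should be established by a continuity/intermediate-value argument rather than the smooth implicit function theorem.
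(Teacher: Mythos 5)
Your argument is essentially correct, but there is nothing in the paper to compare it against: Lemma \ref{lem6} is not proved here at all, it is imported from \cite{GLZ} (``a result after Lemma 3.1''), so you have supplied the missing self-contained proof. Your two steps are the standard ones and both check out: on the stratum $(u,0)\in\mathcal{N}$ the constraint reduces to $\|u\|^{2}_{D_{s}(\mathbb{R}^{n})}=\mu_{1}\int|u|^{2^{\ast}}dx$, whence $E(u,0)=\frac{s}{n}\|u\|^{2}_{D_{s}(\mathbb{R}^{n})}\ge\frac{s}{n}\mu_{1}^{-\frac{n-2s}{2s}}S_{s}^{\frac{n}{2s}}$ by \eqref{int2}, attained at $u_{\mu_{1}}$; and the strict inequality is the usual Chen--Zou/GLZ-type perturbation, projecting $(t\,u_{\mu_{1}},\sigma u_{\mu_{2}})$ onto the single-constraint manifold $\mathcal{N}$ and using $1<\beta<2$ (which, as you note, is forced by $(H)$, and holds anyway in the setting of Theorem \ref{Th3} where the lemma is invoked) so that the coupling gain of order $\sigma^{\beta}$ dominates the cost $\sigma^{2}\|u_{\mu_{2}}\|^{2}_{D_{s}(\mathbb{R}^{n})}$. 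Two remarks: first, your test pair lies in the GLZ manifold $\mathcal{N}$, which is indeed the manifold appearing in the statement; the passage to the paper's manifold $\mathbb{N}$ is exactly what Remark \ref{rem6} handles, so this is consistent. Second, the regularity caveat you raise about the branch $t(\sigma)$ is real but easily closed: for fixed small $\sigma$ the scalar function $g_{\sigma}(t)$ defined by the constraint is strictly decreasing in $t$ near $t=1$, with $g_{\sigma}(1)<0$ and $g_{\sigma}(1-M\sigma^{\beta})>0$ for a suitable constant $M$, so the intermediate value theorem gives a unique root with the two-sided asymptotics $1-t(\sigma)\sim\frac{C}{(2^{\ast}-2)\|u_{\mu_{1}}\|^{2}_{D_{s}(\mathbb{R}^{n})}}\sigma^{\beta}$, which is all the expansion of the energy difference requires; with that, your conclusion $A\le E(t(\sigma)u_{\mu_{1}},\sigma u_{\mu_{2}})<E(u_{\mu_{1}},0)$ stands.
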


\begin{remark}\label{rem6}
The Nahri manifold used in \cite{GLZ} is different from the Nahri manifold used in our paper. However, the positive solutions of \eqref{int1} are certainly in both of the Nahri manifolds, therefore, Z. Guo, S. Luo and W.
Zou's result: Theorem \ref{Th5} ensures that
$$
A=\inf \limits_{(u,v)\in {\mathcal N}} E(u,v)=\inf \limits_{(u,v)\in {\mathbb N}} E(u,v).
$$

\end{remark}
\begin{proof}[Proof of Theorem \ref{Th3}]  To obtain the existence of $(k(\gamma),\ l(\gamma)$\ for $\gamma >0$,
 we define functions
$$
F_{1}(k,l,\gamma)= \mu_{1}k^\frac{{2^{\ast}-2}}{2}+\frac{\alpha\gamma}{2^{\ast}}k^\frac{{\alpha-2}}{2}l^\frac{{\beta}}{2}- 1 \ \ k,l>0,
$$

$$
F_{2}(k,l,\gamma)= \mu_{2}l^\frac{{2\ast-2}}{2}+\frac{\beta\gamma}{2^{\star}}k^\frac{{\alpha}}{2}l^\frac{{\beta-2}}{2}-1\ \ k,l>0,
$$let

$$
k(0)=\mu_{1}^{-\frac{2}{2^{\ast}-2}},\ l(0)=\mu_{2}^{-\frac{2}{2^{\ast}-2}},
$$then

\begin{align*}
             F_{1}(k(0),l(0),0)&=F_{2}(k(0),l(0),0)=0,\\
 \partial_{k}F_{1}(k(0),l(0),0)&=\frac{2^{\ast}-2}{2}\mu_{1}k^{\frac{2^{\ast}-4}{2}}>0,\\
 \partial_{l}F_{1}(k(0),l(0),0)&=\partial_{k}F_{2}(k(0),l(0),0)=0,\\
 \partial_{l}F_{2}(k(0),l(0),0)&=\frac{2^{\ast}-2}{2}\mu_{2}l^{\frac{2^{\ast}-4}{2}}>0.\\
\end{align*}

Denote
\begin{equation*}D=
\left(
  \begin{array}{cc}
     \partial_{k}F_{1}(k(0),l(0),0),& \partial_{l}F_{1}(k(0),l(0),0)  \\
    \partial_{k}F_{2}(k(0),l(0),0), & \partial_{l}F_{2}(k(0),l(0),0) \\
  \end{array}
\right),
\end{equation*}
since $det(D)>0$, by implicit function theorem,\ we see that\ $k(\gamma),\ l(\gamma)$ are well defined
and of class \ $C^{1}$ in\ $(-\gamma_{2},\gamma_{2})$,\ for some $\gamma_{2}>0$ and

$$
F_{1}(k(\gamma),\ l(\gamma),\ \gamma)=F_{2}(k(\gamma),\ l(\gamma),\ \gamma)=0.
$$

Thus\ $(\sqrt{k_{\gamma}}U_{\varepsilon, y},\sqrt{l_{\gamma}}U_{\varepsilon, y})$ is a solution of \eqref{int1}.

Since
$$
\lim \limits _{\gamma\rightarrow 0}(k(\gamma)+l(\gamma))=k(0)+l(0)=\mu_{1}^{-\frac{n-2s}{2s}}+\mu_{2}^{-\frac{n-2s}{2s}},
$$there exists a $\gamma_{1}\in(0,\gamma_{2})$, such that

$$
k(\gamma)+l(\gamma)>min\{\mu_{1}^{-\frac{n-2s}{2s}},\mu_{2}^{-\frac{n-2s}{2s}}\},\ \forall\ \gamma\in(0,\gamma_{1}).
$$

By Lemma \ref{lem6} and Remark \ref{rem6}, we get
\begin{align*}
E(\sqrt{k_{\gamma}}U_{\varepsilon, y},\sqrt{l_{\gamma}}U_{\varepsilon, y})&=\frac{s}{n}(k(\gamma)+l(\gamma))S_{s}^{\frac{n}{2s}}\\
    &>min\{\frac{s}{n}\mu_{1}^{-\frac{n-2s}{2s}}S_{s}^{\frac{n}{2s}},\frac{s}{n}\mu_{2}^{-\frac{n-2s}{2s}}S_{s}^{\frac{n}{2s}}\}\\
    &>A=\inf \limits_{(u,v)\in {\mathbb N}}E(u,v),
\end{align*}
that is $(\sqrt{k_{\gamma}}U_{\varepsilon, y},\sqrt{l_{\gamma}}U_{\varepsilon, y})$ is another positive solution of \eqref{int1}.
\end{proof}

\end{document}